\titleformat{\subsection}[hang]{\it}{\thesubsection}{4 pt}{ \it}[]
\newtheorem{theorem}{Theorem}[section]
\newtheorem{remark}[theorem]{Remark}
\newtheorem{lemma}[theorem]{Lemma}
\numberwithin{equation}{section}
\title{Moderate Deviations for the SSEP with a Slow Bond}
\author{Xiaofeng Xue \thanks{\textbf{E-mail}: xfxue@bjtu.edu.cn \textbf{Address}: School of Science, Beijing Jiaotong University, Beijing 100044, China.} and { Linjie Zhao} \thanks{\textbf{E-mail}: zhaolinjie@pku.edu.cn \textbf{Address}: School of Mathematical Sciences, Peking University, Beijing 100871, China.}}
\date{}
\begin{document}
	
\maketitle

\begin{abstract}
We consider the one dimensional symmetric simple exclusion process with a slow bond.  In this model, particles cross each bond at rate $N^2$, except one particular bond, the slow bond, where the rate is $N$. Above,  $N$ is the scaling parameter. This model has been considered in the context of hydrodynamic limits, fluctuations and large deviations. We investigate moderate deviations from hydrodynamics and obtain a moderate deviation principle.
\end{abstract}

\noindent {\bf Keywords:} exclusion process, slow bond, moderate deviation, exponential martingale.

\section{Introduction}\label{section one}

The symmetric simple exclusion process (SSEP) with a slow bond was introduced in \cite{francogn2013slowbond} by Franco, Gon{\c{c}}alves and Neumann to derive from microscopic systems PDEs with boundary conditions, which has become a popular topic recently \cite{franco2016scaling,franco2019hydrodynamic,baldasso2017exclusion}.  The process evolves on the discrete ring with $N$ sites, where $N$ is the scaling parameter. There is at most one particle per site. Particles cross each bond at rate $N^2$ except one particular bond, where the rate is $N$.

The hydrodynamic limit of the SSEP with a slow bond has been  well understood \cite{francogn2013slowbond,francogn2015slowbond}.   The hydrodynamic equation turns out to be the heat equation with Robin's boundary conditions:
\begin{equation}\label{hydroEqn}
\left\{\begin{array}{ll}\partial_{t} \rho\,(t, u)=\partial_{u}^{2} \rho\,(t, u), &\quad t>0, \,u \in \mathbb{T} \backslash\{0\}, \\ \partial_{u} \rho\left(t, 0^{+}\right)=\partial_{u} \rho\left(t, 0^{-}\right)=\rho\left(t, 0^{+}\right)-\rho\left(t, 0^{-}\right), &\quad t>0, \\ \rho(0, u)=\gamma(u), &\quad u \in \mathbb{T},\end{array}\right.
\end{equation}
where $\mathbb{T}$ is the continuous ring, $0^+$ and $0^-$ denote respectively the right limit and left limit at site $0$, and $\gamma (\cdot)$ is the initial density profile.
Then it is natural to consider the equilibrium fluctuations and large deviations from the hydrodynamic limit. Equilibrium fluctuations have been studied in \cite{francogn2013slowbondfluc} and large deviations in \cite{francon2017largedeviationslowbond} by Franco, Gon{\c{c}}alves and Neumann.

To better understand the SSEP with a slow bond, we consider the moderate deviations from the hydrodynamic limit,  which gives asymptotic behavior of the model between the central limit theorem and the large deviation.  As far as we know, the only paper concerned about moderate deviations from hydrodynamics is \cite{gao2003moderate} authored by Gao and  Quastel, where the classic SSEP was considered. For literatures about theories of moderate deviations, see References \cite{Borovkov1978, deAcosta1998, Dembo1997, Gao1996, Wang2016, WangR2015, Wu1995} and so on.

\,

Next, we introduce the SSEP with a slow bond and main results. The process evolves on $\mathbb{T}_{N}=\{0,1,\ldots,N-1\}$ the ring with $N$ sites, with the convention $N \equiv 0$. Therefore, the state space is $\{0,1\}^{\mathbb{T}_N}$. For each configuration $\eta \in \{0,1\}^{\mathbb{T}_N}$, $\eta (x)=1$ means site $x$ is occupied by a particle, and $\eta (x) = 0$ means site $x$ is vacant. The infinitesimal generator $\mathcal{L}_N$ of the process is
\begin{equation*}
\mathcal{L}_N f (\eta) = N [f (\eta^{-1,0}) - f (\eta)] + N^2 \sum_{x \in \mathbb{T}_N, \atop x \neq -1} [f (\eta^{x,x+1}) - f(\eta)],
\end{equation*}
where
\[
\eta^{x,y}(u)=
\begin{cases}
\eta(u) & \text{~if~}u\neq x, y,\\
\eta(y) & \text{~if~}u=x,\\
\eta(x) & \text{~if~}u=y
\end{cases}
\]
for any $x\neq y$.   Denote by $\{\eta_t\}_{t\geq 0}$  the process with generator $\mathcal{L}_N$.  We suppress the dependence of the process $\{\eta_t\}_{t\geq 0}$ on $N$ for short.

Equivalently, we can define the process in the following way.  For each $i\neq -1$, let $\{Y_i(t)\}_{t\geq 0}$ be a Poisson process with rate $N^2$ and  $\{Y_{-1}(t)\}_{t\geq 0}$ be a Poisson process with rate $N$. Assume that all these Poisson processes are independent.  Then at any event moment of $Y_i (\cdot)$, $\eta(i)$ and $\eta(i+1)$ exchange their values.

The SSEP with a slow bond has a family of invariant measures indexed by the particle density. To be precise, let $\nu_\rho,\,\rho \in [0,1],$ be the product measure on $\mathbb{T}_N$ with marginals given by
$$
\nu_\rho \{\eta: \eta (x) = 1\} = \rho, \,\,\forall x \in \mathbb{T}_N.
$$
 Then, it can be checked easily that $\nu_\rho, \, \rho \in [0,1],$ are  reversible measures for the process $\{\eta_t\}_{t\geq 0}$.

\,

To define the empirical density and rate functions, we need to  introduce some definitions and notations and then discuss some topological issues. We identify $\mathbb{T}$ with $[0,1)$,  and thus $0^+$ with $0$ and $0^-$ with $1$.
 By the boundary conditions imposed on the hydrodynamic equation \eqref{hydroEqn}, it is reasonable to consider  test functions $G\in C^1[0,1]$ with the property
\begin{equation}\label{equ special function}
G^\prime (0) = G^\prime (1) = G (0) - G (1).
\end{equation}
The result of this paper  relies heavily on the  above kind of functions, especially trigonometric functions satisfying \eqref{equ special function}.
Define $\mathscr{G}_0$ as
\[
\mathscr{G}_0 := {\rm span}\left(\left\{\sin\left(k_n\left(x-\frac{1}{2}\right)\right)\right\}_{n\geq 1}\,\bigcup\, \left\{\cos\big(2n\pi x\big)\right\}_{n\geq 0}\right),
\]
where $k_n$ is the unique solution to the equation $-\frac{x}{2}=\tan\frac{x}{2}$ in ${\big((2n-1)\pi, (2n+1)\pi\big)}$ for each $n \geq 1$. It can be checked easily that  any $G \in \mathscr{G}_0$ satisfies \eqref{equ special function}. According to  \cite[Theorem 1]{Franco2009moderate} given by Franco and Landim, we can prove the set of the above trigonometric functions is a basis in $L^2 [0,1]$, which is crucial  to construct the topology of this paper.

\begin{lemma}\label{lemma 1.1.1 topology}
The set
	$\left\{\sin\big(k_n(x-1/2)\big)\right\}_{n\geq 1}\,\bigcup \,\left\{\cos\big(2n\pi x\big)\right\}_{n\geq 0}$ is an orthogonal basis of $L^2[0,1]$.
\end{lemma}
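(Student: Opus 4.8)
The plan is to view the entire listed family as the complete system of eigenfunctions of a single self-adjoint operator, and then to split the verification into orthogonality (a short computation) and completeness (the genuine difficulty). Concretely, I would study the operator $A=-d^2/dx^2$ on the domain of those $G\in H^2[0,1]$ satisfying the coupled boundary condition \eqref{equ special function}, together with the eigenvalue problem $-G''=\lambda G$. Integrating by parts twice, the boundary term in $\langle Af,g\rangle-\langle f,Ag\rangle$ collapses to $ab-ba=0$, where $a=f'(0)=f'(1)=f(0)-f(1)$ and $b$ is the analogous quantity for $g$; the same computation gives $\langle Af,f\rangle=a^2+\int_0^1(f')^2\,dx\geq 0$. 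Hence $A$ is symmetric and nonnegative, so all eigenvalues are real and nonnegative and eigenfunctions belonging to distinct eigenvalues are orthogonal.

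Next I would solve the eigenvalue ODE explicitly. Writing $G=A\cos\big(k(x-\tfrac12)\big)+B\sin\big(k(x-\tfrac12)\big)$ with $\lambda=k^2\geq 0$ and imposing \eqref{equ special function}, the condition $G'(0)=G'(1)$ forces $A\sin(k/2)=0$. This gives two branches: the cosine branch $k=2n\pi$, on which the remaining condition forces $B=0$ and produces (up to sign, and including the constant at $k=0$) the functions $\cos(2n\pi x)$; and the sine branch $A=0$, on which $G'(0)=G(0)-G(1)$ reduces exactly to $-k/2=\tan(k/2)$, i.e.\ $k=k_n$, producing $\sin\big(k_n(x-\tfrac12)\big)$. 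Thus the listed family is precisely the set of eigenfunctions of $A$; the two subfamilies carry disjoint (hence simple) eigenvalues, so orthogonality is immediate from the symmetry of $A$. The same conclusion follows transparently from parity: substituting $y=x-\tfrac12$ turns the cosines into even functions and the sines into odd ones, so the cross terms vanish automatically and only the intra-family orthogonality remains, which is classical for the cosines and follows from Green's identity for the sines.

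The main obstacle is completeness: one must show that no nonzero $f\in L^2[0,1]$ is orthogonal to every listed function. After the parity reduction, $L^2[0,1]$ splits (in the variable $y$) into its even and odd subspaces; the even subspace is spanned by $\{1,\cos(2n\pi y)\}_{n\geq 1}$ by the classical Fourier theorem, so the real work lies in the odd subspace, where the frequencies $k_n$ form a transcendental, non-arithmetic sequence and standard Fourier arguments do not apply directly. Here I would invoke the Sturm--Liouville spectral theory for $A$ --- a regular second-order operator on a bounded interval with self-adjoint boundary conditions has compact resolvent, so its eigenfunctions form a complete orthogonal system --- or, equivalently, appeal to \cite[Theorem 1]{Franco2009moderate}, which supplies precisely the completeness of the sine family in the odd subspace. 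Recombining the two subspaces then yields the asserted orthogonal basis of $L^2[0,1]$.
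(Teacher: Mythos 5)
Your proposal is correct and follows essentially the same route as the paper: both identify the listed functions as the complete set of eigenfunctions of the Laplacian restricted to the domain determined by the boundary condition \eqref{equ special function}, verify via the eigenvalue ODE that the two branches $k=2n\pi$ and $-k/2=\tan(k/2)$ exhaust the spectrum, and obtain completeness from \cite[Theorem 1]{Franco2009moderate} (the paper does this by recognizing the operator as $\frac{d}{dx}\frac{d}{dW}$ with $W$ equal to Lebesgue measure plus a Dirac mass at $0$). Your added details --- the explicit symmetry/nonnegativity computation, the parity splitting at $x=1/2$, and the alternative appeal to regular Sturm--Liouville theory --- are all sound and merely flesh out what the paper leaves as ``direct calculations.''
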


We put the proof of Lemma \ref{lemma 1.1.1 topology} in the appendix.

\,

Let $\mathscr{M}$ be  the space of linear (not necessarily bounded) functionals on $\mathscr{G}_0$ endowed with the following topology: for any $\mathscr{A}_n \in \mathscr{M},\, n \geq 1,$ and $\mathscr{A} \in \mathscr{M}$,
\[
\lim_{n\rightarrow+\infty}\mathscr{A}_n=\mathscr{A} \quad  \text{in $\mathscr{M}$}  \qquad \text{if and only if}  \qquad \lim_{n\rightarrow+\infty}\mathscr{A}_n(\theta_k)=\mathscr{A}(\theta_k) \quad \text{for all integers $k$},
\]
where $\theta_n(x)=\sin\big(k_n(x-1/2)\big)$ for $n\geq 1$ and $\theta_{-n}(x)=\cos\big(2n\pi x\big)$ for $n\geq 0$.  The above topology is metrizable and the metric $d\,(\cdot,\cdot)$ is given by
\[
d\big(\mathscr{A}_1, \mathscr{A}_2\big)=\sum_{-\infty<n<+\infty}\frac{1}{2^{|n|}}\frac{|\mathscr{A}_1(\theta_n)-\mathscr{A}_2(\theta_n)|}{1+|\mathscr{A}_1(\theta_n)-\mathscr{A}_2(\theta_n)|},\quad
\mathscr{A}_1, \,\mathscr{A}_2\in \mathscr{M}.
\]
It can be checked  directly that the space $\mathscr{M}$ is complete and separable under the above metric. Note that a bounded signed measure $\mu$ on $[0,1]$ can be identified with an element in $\mathscr{M}$ in the sense that $\mu(f)=\int_{[0,1]}f(x)\mu(dx)$ for any $f\in \mathscr{G}_0$.

\,

\begin{remark}\label{remark 1.2}
We construct the above topology for  technical reasons. Mainly, we can not show the uniqueness or existence of the weak solution to a PDE arising from hydrodynamics of the SSEP with a slow bond under a Girsanov's transformed measure. However, if we do not distinguish two measures $\mu_1$ and $\mu_2$ satisfying $\mu_1(\theta_n)=\mu_2(\theta_n)$ for all $n$, the above PDE can be reduced to an ODE on $\mathscr{M}$, the existence and uniqueness of the solution to which can be rigorously proved. For mathematical details, see Section \ref{section of lower bound} and appendix.
\end{remark}

\,

In the following, we will fix a horizonal time $T > 0$. Let $D\big([0,T],  \mathscr{M} \big)$ be the  space of c\`{a}dl\`{a}g functions from $[0,T]$ to $\mathscr{M}$ endowed with the Skorohod topology. Define the rescaled central empirical density $\mu^N_t (d u)$ as
\begin{equation*}
\mu^N_t (d u) := \frac{1}{a_N} \sum_{x \in \mathbb{T}_N} (\eta_t (x) - \rho) \delta_{x/N} (d u),
\end{equation*}
where $\sqrt{N} \ll a_N \ll N$.  We will regard  $\mu^N :=\{\mu_t^N\}_{0\leq t\leq T}$ as  a random element taking values in $D\big([0,T], \mathscr{M} \big)$.

Let $\mathscr{G}$ be the family of functions $G :[0,T] \times [0,1] \rightarrow \mathbb{R}$ with the following forms: there exist $M \in \mathbb{N}$ and $b_m (t) \in C^1 ([0,T])$, $- M \leq m \leq M$ such that
\begin{equation*}
G (t,u) = \sum_{m=-M}^M b_m (t)\, \theta_m (u),  \quad (t,u) \in [0,T] \times [0,1].
\end{equation*}
Then for any $G \in \mathscr{G}$,
\begin{equation}\label{bc}
\partial_u G (t,0) = \partial_u G (t,1) = G (t,0) - G (t,1),\quad \forall\, t \in [0,T].
\end{equation}
We sometimes write $G_t (u)$ for $G (t,u)$. For $G \in \mathscr{G}$, define the extended Laplacian $\tilde{\Delta}$ as
\[
\tilde{\Delta} G_t \,(u) = \begin{cases}
\partial_u^2 G_t\, (u) &\text{if $u \neq 0$},\\
\partial_u^2 G_t\, (0^+) &\text{if $u = 0$}.
\end{cases}
\]

Fix a density $\rho \in (0,1)$.    Denote by $Q^N_\rho$  the law of  $\{\mu^N_t\}_{0 \leq t \leq T}$  with initial distribution  $\nu_\rho$. Let $\mathbb{P}^N_\rho$ be the law of the process $\{\eta_t\}_{0 \leq t \leq T}$ with initial distribution $\nu_\rho$, and $\mathbb{E}^N_\rho$ the corresponding expectation. Let $\rm{E}_{\nu_\rho}$ be the expectation with respect to $\nu_\rho$. For $\mu \in D ([0,T],\mathscr{M})$, define
\begin{equation}
\begin{aligned}
&I (\mu) := I_{ini} (\mu_0) + I_{dyn} (\mu),\\
&I_{ini}(\mu_0) :=\sup_{\gamma \in \mathscr{G}_0} \left\{\mu_0 (\gamma) -\frac{\rho(1-\rho)}{2}\int_0^1 \gamma^2(u) \,du \right\},\\
&I_{dyn} (\mu) := \sup_{G \in \mathscr{G}} \left\{ \ell_T (\mu,G) - \rho (1-\rho) \int_0^T \left(G_t (0) - G_t (1) \right)^2 dt  \right. \\
&\left. \qquad \qquad \qquad \qquad \qquad \qquad
- \rho (1-\rho) \int_0^T \int_0^1 \left( \partial_u G_t (u) \right)^2 \,d u \,d t\right\},
\end{aligned}
\end{equation}
where
\begin{align}\label{l_T}
\ell_T (\mu,G) := \mu_T\left(G_T\right)  -  \mu_0 \left(G_0\right)  - \int_0^T  \mu_t \left((\partial_t + \tilde{\Delta}) G_t\right)  \,d t.
\end{align}

Now we are ready to state  the main result of the paper.

\,

\begin{theorem}\label{t1 main}
For any closed set $C$ of $D \left( [0,T], \mathscr{M} \right) $,
\begin{equation}\label{eqnupperbound}
\limsup_{N \rightarrow \infty} \frac{N}{a_N^2} \log Q^N_\rho\, [C]  \leq - \inf_{\mu \in C} I (\mu),
\end{equation}
and for any open set $O$ of $D \left( [0,T], \mathscr{M}  \right)$,
\begin{equation}\label{equ lower bound}
\liminf_{N \rightarrow \infty} \frac{N}{a_N^2} \log Q^N_\rho\, [O]  \geq - \inf_{\mu \in O} I (\mu).
\end{equation}
\end{theorem}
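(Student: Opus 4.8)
The plan is to follow the exponential-martingale scheme of Kipnis–Olla–Varadhan and Gao–Quastel, adapted to the slow bond. The engine of the whole argument is the family of mean-one martingales
\begin{equation*}
\mathscr{N}^N_t (G) = \exp\left\{ F^N_t (\eta_t) - F^N_0 (\eta_0) - \int_0^t e^{-F^N_s (\eta_s)}\big(\partial_s + \mathcal{L}_N\big) e^{F^N_s (\eta_s)}\, ds \right\}, \quad F^N_s (\eta) := \frac{a_N}{N}\sum_{x\in\mathbb{T}_N}(\eta(x)-\rho)\,G_s\big(\tfrac{x}{N}\big),
\end{equation*}
indexed by $G \in \mathscr{G}$, so that $F^N_s (\eta_s) = \tfrac{a_N^2}{N}\mu^N_s (G_s)$. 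I would first Taylor expand the compensator: each swap across a fast bond changes $F^N$ by an amount of order $a_N/N^2$, so $e^{\Delta}-1 = \Delta + \tfrac12 \Delta^2 + \cdots$ is legitimate. Summing the linear terms and integrating by parts, the bulk fast bonds produce $\tfrac{a_N^2}{N}\mu^N_s(\partial_u^2 G_s)$, while the Taylor expansions at the two sites adjacent to the cut, together with the linear part of the slow bond, generate boundary contributions that are a priori of the larger order $a_N \gg a_N^2/N$; the boundary relation \eqref{bc} forces these to cancel exactly, leaving only the bulk term, and the single-site discrepancy between $\partial_u^2$ and $\tilde\Delta$ at $u=0$ is negligible at scale $a_N^2/N$. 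Thus the linear part equals $\tfrac{a_N^2}{N}\ell_T(\mu^N,G)$ with $\ell_T$ as in \eqref{l_T}. The quadratic terms, after replacing the occupation variables $(\eta_s(x+1)-\eta_s(x))^2$ by their equilibrium mean $2\rho(1-\rho)$, give $\tfrac{a_N^2}{N}\rho(1-\rho)\big[\int_0^T\int_0^1(\partial_u G_t)^2\,du\,dt + \int_0^T (G_t(0)-G_t(1))^2\,dt\big]$, the first summand coming from the fast bonds (a spatial average) and the second from the slow bond alone (a temporal, ergodic average). Hence $\log\mathscr{N}^N_T(G) = \tfrac{a_N^2}{N}\Phi_G(\mu^N) + o(a_N^2/N)$, where $\Phi_G(\mu) = \ell_T(\mu,G) - \rho(1-\rho)\int_0^T(G_t(0)-G_t(1))^2\,dt - \rho(1-\rho)\int_0^T\int_0^1(\partial_u G_t)^2\,du\,dt$ is precisely the functional whose supremum over $G\in\mathscr{G}$ defines $I_{dyn}$.

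For the upper bound I would first establish the two replacement estimates above at superexponential precision on the scale $a_N^2/N$; this is where $\sqrt{N}\ll a_N\ll N$ is used, since the relevant averages concentrate at speed $N\gg a_N^2/N$ (spatially for the fast bonds, and temporally at the slow bond because the process mixes at rate $N^2$). Granting this, Markov's inequality applied to $\mathscr{N}^N_T(G)$ gives $\mathbb{E}^N_\rho\big[\exp(\tfrac{a_N^2}{N}\Phi_G(\mu^N))\big]\le e^{o(a_N^2/N)}$ for each fixed $G$. Combining this with the static moderate deviation upper bound for $\mu^N_0$ under the product measure $\nu_\rho$, whose rate is $I_{ini}$ by an elementary Cram\'er computation (the Bernoulli variance $\rho(1-\rho)$ producing the Gaussian quadratic), a minimax/Laplace argument over a finite family of test functions yields \eqref{eqnupperbound} for compact sets with rate $I = I_{ini} + I_{dyn}$. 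To upgrade from compact to closed sets I would prove exponential tightness of $\{\mu^N\}$ in $D([0,T],\mathscr{M})$ at speed $a_N^2/N$: since the topology of $\mathscr{M}$ tests only against the countable basis $\{\theta_k\}$ of Lemma \ref{lemma 1.1.1 topology}, it suffices to control each real coordinate $\mu^N_\cdot(\theta_k)$, for which the martingale decomposition together with an Aldous/Doob exponential estimate gives the required modulus-of-continuity and compact-containment bounds.

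For the lower bound I would perform a change of measure. Given $G\in\mathscr{G}$, the tilted law $d\mathbb{P}^{N,G}_\rho = \mathscr{N}^N_T(G)\,d\mathbb{P}^N_\rho$ turns the evolution into a weakly perturbed exclusion process whose empirical density obeys, at speed $a_N^2/N$, a law of large numbers toward the trajectory $\mu^G$ solving the linear equation $\tfrac{d}{dt}\mu^G_t(H) = \mu^G_t(\tilde\Delta H) + 2\rho(1-\rho)\big[\langle \partial_u G_t, \partial_u H\rangle + (G_t(0)-G_t(1))(H(0)-H(1))\big]$ for all $H \in \mathscr{G}_0$. Testing against the basis $\{\theta_k\}$ reduces this to a closed and solvable system of linear ODEs for the coordinates $\mu^G_t(\theta_k)$: this is exactly the reduction announced in Remark \ref{remark 1.2}, and is the reason for working on $\mathscr{M}$ rather than on a space of measures where well-posedness of the PDE is unclear. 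Writing $O'\subset O$ for a neighbourhood of $\mu^G$, an entropy inequality gives $\tfrac{N}{a_N^2}\log Q^N_\rho[O]\ge \tfrac{N}{a_N^2}\log Q^N_\rho[O'] \ge -I(\mu^G) - o(1)$, because on the event $\{\mu^N\in O'\}$, which has $\mathbb{P}^{N,G}_\rho$-probability tending to one, the exponent $\tfrac{a_N^2}{N}\log\mathscr{N}^N_T(G)^{-1}$ converges to $-I(\mu^G)$. It then remains to run a density argument showing that every $\mu$ in an open set $O$ with $I(\mu)<\infty$ is approximated, together with its cost, by trajectories of the form $\mu^G$, so that $\inf_{O}I$ is recovered and \eqref{equ lower bound} follows.

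The hardest part will be the lower bound, specifically the combination of well-posedness of the tilted equation and the density recovery. The delicate points are that the perturbed hydrodynamic limit must be proved at superexponential precision, not merely in probability (again via the replacement estimates, now uniformly in the tilt), and that the variational problem defining $I_{dyn}$ be shown to be attained, on a dense set of $\mu$, by genuine test functions $G\in\mathscr{G}$ — an obstruction that the passage to $\mathscr{M}$ and the ODE reduction of Remark \ref{remark 1.2} is designed to circumvent. On the upper-bound side the only real subtlety is the slow-bond bookkeeping of the first paragraph, which converts the rate defect at the bond $-1$, via \eqref{bc}, into the boundary term $(G_t(0)-G_t(1))^2$ in the rate function; everything else is the by-now-classical machinery for moderate deviations of the exclusion process.
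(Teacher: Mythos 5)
Your upper-bound outline matches the paper's proof in all essentials: the Feynman--Kac martingale $M^N_t(G)$, the Taylor expansion with the cancellation of the $O(a_N)$ boundary terms via \eqref{bc}, the super-exponential replacement of $(\eta(x)-\eta(x+1))^2$ and of the slow-bond occupation variables by $2\rho(1-\rho)$, the minimax exchange over compact sets (the paper uses Nikaid\^o's version, Theorem \ref{thm minimax}, precisely because $\mathscr{M}$ is not locally convex in the usual sense), and coordinate-wise exponential tightness against the basis $\{\theta_k\}$. One remark: you do not need the perturbed hydrodynamic limit at super-exponential precision for the lower bound; convergence in probability under the tilted law is enough, since the entropy/Radon--Nikodym argument only requires $\widehat{\mathbb{P}}(\mu^N\in O')\to 1$.

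There is, however, a genuine gap in your lower bound: your tilted measure $d\mathbb{P}^{N,G}_\rho=\mathscr{N}^N_T(G)\,d\mathbb{P}^N_\rho$ perturbs only the dynamics, not the initial distribution. Since $\mathscr{N}^N_0(G)=1$, the law of $\eta_0$ under your tilt is still $\nu_\rho$, so $\mu^N_0\to 0$ (the equilibrium fluctuations are of order $\sqrt{N}/a_N\to 0$), and every limiting trajectory $\mu^G$ your construction can produce has $\mu^G_0=0$. Consequently the exponent $\tfrac{N}{a_N^2}\log\mathscr{N}^N_T(G)^{-1}$ converges to $-I_{dyn}(\mu^G)$, not to $-I(\mu^G)$, and — more seriously — an open set $O$ that is a small neighbourhood of a trajectory $\mu$ with $\mu_0\neq 0$ (hence $I_{ini}(\mu_0)>0$ but possibly $I(\mu)<\infty$) contains no trajectory of the form $\mu^G$, so your density argument cannot reach it and yields nothing. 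The paper repairs exactly this by additionally tilting the initial condition: it starts the process from the product measure $\nu_{N,\phi}$ with marginals $\rho+\tfrac{a_N}{N}\phi(x/N)$ for $\phi\in\mathscr{G}_0$, which both supplies the nonzero initial datum $\mu^G_0(h)=\int_0^1\phi(u)h(u)\,du$ in \eqref{equ ODE for linear operator} and produces the missing factor
\begin{equation*}
\frac{d\mathbb{P}^N_\rho}{d\mathbb{P}^N_{\phi}}=\exp\left\{-\frac{a_N^2}{N}\left(I_{ini}(\mu_0)+o_p(1)\right)\right\}
\end{equation*}
in the Radon--Nikodym computation, so that the two contributions $I_{ini}+I_{dyn}$ assemble into $I$. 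You would also need the Riesz-representation step (Lemma \ref{lemma 5.3}) identifying finite-rate trajectories with pairs $(\phi,\psi)\in L^2[0,1]\times\mathscr{H}$ before the density argument can be run; without it the approximation ``together with its cost'' that you invoke is not justified.
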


\,

\begin{remark}\label{remark 1.4}

We recall the large deviation principle of the SSEP with a slow bond established in \cite{francon2017largedeviationslowbond} by Franco and Neumann for a comparison. Note that definitions and notations in this remark are not utilized elsewhere.  Let
\[
\pi^N_t (d u)= \frac{1}{N} \sum_{x \in \mathbb{T}_N} \eta_t (x) \delta_{x/N} (d u),
\quad
\pi^N=\{\pi^N_t\}_{0\leq t\leq T},
\]
then it was shown in \cite{francon2017largedeviationslowbond} that, roughly speaking,
\[
P(\pi^N\approx\pi)\approx \exp\left\{ -N J(\pi) \right\}
\]
assuming uniqueness for the weak solution to hydrodynamic equation associated to the perturbed process,  where
\begin{align*}
J(\pi)=\sup_{H\in C^{1,2}\left([0, T]\times [0,1]\right)}\{\widehat{\ell}_H(\pi)-\Phi_H(\pi)\}
\end{align*}
with $\widehat{\ell}_H(\pi)$ given by
\begin{align*}
\widehat{\ell}_H(\pi)=&\langle \rho_T, H_T\rangle-\langle\rho_0, H_0\rangle-\int_0^T\langle \rho_t, \left(\partial_t+\Delta \right)H_t\rangle \,dt\\
&-\int_0^T \left\{\rho_t(0)\partial_uH_t(0)-\rho_t(1)\partial_uH_t(1)\right\}\,dt+\int_0^T \left(\rho_t(0)-\rho_t(1)\right)\delta H_t(0)\,dt
\end{align*}
and $\Phi_H(\pi)$ given by
\begin{align*}
\Phi_H(\pi)=&\int_0^T\langle\chi(\rho_t), \left(\partial_uH_t\right)^2\rangle\, dt+\int_0^T\rho_t(1)\left(1-\rho_t(0)\right)\psi\left(\delta H_t(0)\right)\,dt\\
&+\int_0^T\rho_t(0)\left(1-\rho_t(1)\right)\psi\left(-\delta H_t(0)\right)\,dt,
\end{align*}
where $\rho_t$ is the Radon-Nikodym  derivative of $\pi_t$ with respect to the Lebesgue measure, $\psi(x)=e^x-x-1$, $\chi(\rho)=\rho(1-\rho)$ and $\delta H_t(0)=H_t(0)-H_t(1)$. Since $e^x-x-1=\frac{x^2}{2}+o(x^2)$ as $|x|$ decreases to $0$ and
$\widehat{\ell}_H(\pi)$ equals $\ell_T(\pi, H)$ when $H$ satisfies \eqref{bc},  the rate function $I_{dyn}$ can be intuitively considered as the quadratic part of $J$ about its minimum, which is a common relationship between large and moderate deviations for many models in statistical physics.
\end{remark}

\,

{\bf Notation.}  For deterministic positive sequences $\{b_n\}_{n\geq 1}$, $\{c_n\}_{n\geq 1}$ and random sequence $\{X_n\}_{n\geq 1}$, we write $b_n=o(c_n)$ if $
\limsup_{n \rightarrow \infty}\, b_n/c_n = 0
$ and $b_n = \mathcal{O} (c_n)$ if
$
\limsup_{n \rightarrow \infty}\, b_n/c_n  < C
$
for some constant $C$ independent of $n$. We also write $b_n = \mathcal{O}_G (c_n)$ to stress the dependence on some parameter $G$ of the constant $C$.
We write  $X_n=o_p(c_n)$ if $X_n / c_n \rightarrow 0$ in probability as $n \rightarrow \infty$, and  $X_n=o_{\exp}(c_n)$ if
\[
\limsup_{n\rightarrow+\infty}\,\frac{1}{c_n}\,\log P\big(|X_n|>\epsilon\big)=-\infty, \quad \forall \epsilon > 0.
\]
We  remark on these last points that the constant throughout the paper may be different from line to line.

\,

The rest of the paper is devoted to the proof of Theorem \ref{t1 main}. In Section  \ref{secsed} we give several super-exponential  estimates that are necessary in the proof of upper and lower bounds as a preparation.  Moderate upper bounds are proved in Section \ref{sec upper}. Our proof follows a strategy similar with that introduced in \cite{gao2003moderate}, except for some details modified due to  technical reasons caused by the slow bond. First, as introduced above,  we have to choose a proper topology and  to consider the empirical density as a random element taking values in the linear functional space $\mathscr{M}$,  instead of the dual  of Schwartz  functions.  Second,  an extra super-exponential estimate  (Lemma \ref{lem1}) is needed. Third, because of the topology constructed, we have to use a different version of Minimax Theorem (Theorem \ref{thm minimax}) from the one in \cite{gao2003moderate}. Moderate lower bounds are proved in Section \ref{section of lower bound}. A crucial step in the proof is the utilizing of a generalized Girsanov's theorem to give the hydrodynamic equation of the model under a transformed measure.

\section{Super-exponential Decay}\label{secsed}

In this section, we mainly present three super-exponential estimates that are critical when making some replacements and proving exponential tightness.

\,

\begin{lemma}\label{lem1}
For any continuous  function $G: [0,T] \rightarrow \mathbb{R}$ and any $\delta,\,t > 0$,
	\begin{equation}\label{l2}
	\limsup_{N \rightarrow \infty} \frac{1}{a_N} \log \mathbb{P}^N_\rho \left[ \left|\int_0^t \left(\eta_s(0) (1-\eta_s(-1)) - \rho (1-\rho)\right) G_s \, ds\right| > \delta\right] = - \infty.
	\end{equation}
The same result holds with $\eta_s (0) (1-\eta_s (-1))$ replaced by $\eta_s (-1) (1-\eta_s (0))$.
\end{lemma}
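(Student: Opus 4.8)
The plan is to combine an exponential Chebyshev inequality with the Feynman--Kac formula, reducing \eqref{l2} to a variational estimate involving the Dirichlet form, and then to isolate the conserved total--density mode, which the dynamics cannot dissipate and so must be handled by a static large deviation bound under $\nu_\rho$. Fix $\lambda>0$ and set $X_N:=\int_0^t\big(\eta_s(0)(1-\eta_s(-1))-\rho(1-\rho)\big)G_s\,ds$. At scale $a_N$ the exponential Chebyshev inequality gives
\[
\frac{1}{a_N}\log \mathbb{P}^N_\rho\big[X_N > \delta\big] \le -\lambda\delta + \frac{1}{a_N}\log \mathbb{E}^N_\rho\Big[\exp\big\{\lambda a_N X_N\big\}\Big],
\]
and the same for $-X_N$. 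Since $\nu_\rho$ is reversible, $\mathcal{L}_N$ is self-adjoint in $L^2(\nu_\rho)$ and Feynman--Kac yields
\[
\frac{1}{a_N}\log \mathbb{E}^N_\rho\Big[\exp\big\{\lambda a_N X_N\big\}\Big] \le \frac{1}{a_N}\int_0^t \sup_{f}\Big\{ \lambda a_N G_s \int V f^2\,d\nu_\rho - \mathcal{D}_N(f)\Big\}\,ds,
\]
where $V(\eta)=\eta(0)(1-\eta(-1))-\rho(1-\rho)$, the supremum is over $f$ with $\int f^2 d\nu_\rho=1$, and $\mathcal{D}_N(f)=\tfrac{N^2}{2}\sum_{x\ne -1}\int(f(\eta^{x,x+1})-f)^2 d\nu_\rho+\tfrac{N}{2}\int(f(\eta^{-1,0})-f)^2 d\nu_\rho$. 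The whole task reduces to bounding $\int V f^2\,d\nu_\rho$ by $\mathcal{D}_N(f)$.

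Writing $\bar\eta(x):=\eta(x)-\rho$, one has the exact decomposition $V=(1-\rho)\bar\eta(0)-\rho\,\bar\eta(-1)-\bar\eta(0)\bar\eta(-1)$. The crucial feature is that $V$ has a nonzero projection onto the conserved field, namely $\langle V,\sum_x\bar\eta(x)\rangle_{\nu_\rho}=\rho(1-\rho)(1-2\rho)$, and the Feynman--Kac bound is blind to this component: since $\mathcal{L}_N(\tfrac1N\sum_x\bar\eta(x))=0$, the associated eigenvalue is of order $a_N$ rather than $o(a_N)$. I would therefore split $V=\tilde V+(1-2\rho)\,\bar\eta_{\mathrm{avg}}$ with $\bar\eta_{\mathrm{avg}}:=\tfrac1N\sum_x\bar\eta(x)$ and $\tilde V$ of zero conserved charge, and treat the conserved part \emph{outside} the variational scheme. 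Because the particle number is invariant, $\bar\eta_{\mathrm{avg}}(\eta_s)=\bar\eta_{\mathrm{avg}}(\eta_0)$ for all $s$, so this part contributes $(1-2\rho)\bar\eta_{\mathrm{avg}}(\eta_0)\int_0^t G_s\,ds$, a function of the initial configuration alone; Cram\'er's theorem for the i.i.d.\ variables $\{\eta_0(x)\}$ under $\nu_\rho$ gives $\mathbb{P}^N_\rho[|\bar\eta_{\mathrm{avg}}(\eta_0)|>c]\le e^{-c'N}$, which is $o_{\exp}(a_N)$ since $a_N\ll N$ and hence contributes $-\infty$ to the rate.

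It remains to control $\tilde V$ through the Dirichlet form. Here I would use the gradient representation $\bar\eta(0)-\bar\eta_{\mathrm{avg}}=\sum_y w_y\,(\eta(y)-\eta(y+1))$ with bounded weights $|w_y|\le1$ (and likewise for $\bar\eta(-1)$), so that $\tilde V$ is a combination of local bond gradients together with the bounded, charge-zero local product $\bar\eta(0)\bar\eta(-1)$. Antisymmetrizing each bond term under the swap $\eta\mapsto\eta^{y,y+1}$ and using that $\eta(y)-\eta(y+1)$ is odd while $\nu_\rho$ is swap-invariant, Cauchy--Schwarz gives $\int(\eta(y)-\eta(y+1))f^2\,d\nu_\rho\le\big(\int(f(\eta^{y,y+1})-f)^2 d\nu_\rho\big)^{1/2}$; the fast-bond rate $N^2$ and a further Cauchy--Schwarz over the $N$ bonds then combine to produce $|\int \tilde V f^2\,d\nu_\rho|\le C\sqrt{\mathcal{D}_N(f)/N}$. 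Inserting this into the variational problem and optimizing $\lambda a_N|G_s|C\sqrt{D/N}-D$ over $D=\mathcal{D}_N(f)\ge0$ gives at most $C'\lambda^2 a_N^2 G_s^2/N$; dividing by $a_N$ and integrating leaves $C'\lambda^2\tfrac{a_N}{N}\int_0^t G_s^2\,ds\to0$, again because $a_N\ll N$. Thus $\tfrac1{a_N}\log\mathbb{P}^N_\rho[X_N>\delta]\le-\lambda\delta+o(1)$; letting $N\to\infty$ and then $\lambda\to\infty$ gives $-\infty$, and the identical argument applied to $-X_N$ and to the relabelled function $\eta_s(-1)(1-\eta_s(0))$ finishes the proof.

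The main obstacle I expect is precisely the separation of the conserved mode from the dissipative one: the estimate $|\int\tilde V f^2 d\nu_\rho|\le C\sqrt{\mathcal{D}_N(f)/N}$ relies on $\tilde V$ having zero overlap with $\sum_x\bar\eta(x)$, and one must recognize that $V$ itself does not, extracting the offending $\bar\eta_{\mathrm{avg}}$ component and estimating it statically via $\nu_\rho$ rather than through the Dirichlet form. The gradient/antisymmetrization bookkeeping for the quadratic term $\bar\eta(0)\bar\eta(-1)$ is the most delicate routine part, but its vanishing conserved charge guarantees it falls under the same $\sqrt{\mathcal{D}_N(f)/N}$ control.
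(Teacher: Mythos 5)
Your frame (exponential Chebyshev at scale $a_N$, Feynman--Kac, gradient bounds through the Dirichlet form for the degree-one part, and a static estimate at $t=0$ for the conserved mode $\bar\eta_{\mathrm{avg}}$) is sound, and the weighted Cauchy--Schwarz giving $\big|\int(\bar\eta(0)-\bar\eta_{\mathrm{avg}})f^2\,d\nu_\rho\big|\leq C\sqrt{\mathcal{D}_N(f)/N}$ does go through even though the telescoping path crosses the slow bond (the slow bond contributes $w_{-1}^2/N=\mathcal{O}(1/N)$ to the weighted sum, which is exactly the allowed order). The extraction of the conserved component and its treatment via Cram\'er's theorem under $\nu_\rho$ is also correct. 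The gap is in the quadratic term $\bar\eta(0)\bar\eta(-1)$. Your argument for the gradient pieces rests on antisymmetry: $\eta(y)-\eta(y+1)$ is odd under the swap $\eta\mapsto\eta^{y,y+1}$, which is what lets you trade $\int(\eta(y)-\eta(y+1))f^2\,d\nu_\rho$ for a square root of a single Dirichlet summand. The function $\bar\eta(0)\bar\eta(-1)$ admits no such representation: it is \emph{even} under the swap at the slow bond $\{-1,0\}$ and has no useful antisymmetry under any other single swap, and it is not a linear combination of bond gradients. ``Zero overlap with $\sum_x\bar\eta(x)$'' is a necessary condition for an $H_{-1}$-type bound but nowhere near sufficient to produce $\big|\int \bar\eta(0)\bar\eta(-1)f^2\,d\nu_\rho\big|\leq C\sqrt{\mathcal{D}_N(f)/N}$ by elementary integration by parts; and the perturbation-theoretic route ($\sup_f\{\theta\int Wf^2-\mathcal{D}_N(f)\}\lesssim\theta^2\langle W,(-\mathcal{L}_N)^{-1}W\rangle$) is unavailable here because $\theta=\lambda a_N\|G\|_\infty\to\infty$ while the spectral gap stays of order one, so the expansion is outside its regime of validity. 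As written, the step ``its vanishing conserved charge guarantees it falls under the same $\sqrt{\mathcal{D}_N(f)/N}$ control'' is an unproved assertion, and it is exactly the hard point of the lemma.

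The paper resolves this by never confronting the degree-two local function head on. It writes $\eta(0)(1-\eta(-1))-\rho(1-\rho)$ as a sum of four terms using block averages $\eta^{M,\mathrm{R}}(0)$ and $\eta^{M,\mathrm{L}}(-1)$: two terms of the form (bond gradient)$\times$(bounded factor), which are handled through the Dirichlet form much as you handle your linear parts (yielding the constraint $Ma_N\ll N^2$), and two terms involving only centered block averages, e.g.\ $(\eta^{M,\mathrm{R}}(0)-\rho)(1-\eta^{M,\mathrm{L}}(-1))$, which are estimated \emph{statically}: Jensen's inequality plus the invariance of $\nu_\rho$ reduces them to a moment bound under the product measure, namely ${\rm E}_{\nu_\rho}\big[(\eta^{M,\mathrm{R}}(0)-\rho)^{2k}\big]\leq C(\rho)^k k!/M^k$ as in \eqref{equ control of 2k moments}, and taking $M=[N/2]$, $A\sim N$ gives decay at rate $N/a_N\to\infty$. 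In other words, the product structure is tamed by replacing each local factor with a block average whose fluctuations are sub-Gaussian of variance $1/M$, and the error of that replacement is a genuine gradient. To repair your proof you would need to import this one-block-type replacement (or some equivalent two-blocks/equivalence-of-ensembles input) for the term $\bar\eta(0)\bar\eta(-1)$; the rest of your argument can then stand.
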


\begin{proof}
We only present the proof of  \eqref{l2} since the rest is the same. For any integer $M > 0$ and $x \in \mathbb{T}_N$, define  $\eta^{M,\rm{R}} (x)$ (resp. $\eta^{M,\rm{L}} (x)$) as the average density over the box of size $M$ to the right (resp. left) of site $x$ ,
	\begin{equation*}
	\eta^{M,\rm{R}} (x) = \frac{1}{M} \sum_{y=x}^{x+M-1} \eta(y), \quad
	\eta^{M,\rm{L}} (x) = \frac{1}{M} \sum_{y=x-M+1}^{x} \eta(y).
	\end{equation*}
	Note that  for every integer $M > 0$,
	\begin{align*}
	\eta (0) (1-\eta (-1)) &- \rho (1-\rho) =
	\left(\eta(0) - \eta^{M,\rm{R}} (0)  \right) \left(1-\eta(-1)\right) \\
	&+ \eta^{M,\rm{R}} (0)  (\eta^{M,\rm{L}} (-1)  - \eta(-1))
	+ \left(\eta^{M,\rm{R}} (0) - \rho \right)\left(1-\eta^{M,\rm{L}} (-1) \right)\\
	&+ \rho \left(\rho - \eta^{M,\rm{L}} (-1) \right).
	\end{align*}
	Since for any positive sequences $\{b_N\}_{N \geq 1}$ and $\{c_N\}_{N \geq 1}$,
	$$
	\limsup_{N \rightarrow \infty} \frac{1}{a_N} \log (b_N+c_N) \leq \max \left\{  	\limsup_{N \rightarrow \infty} \frac{1}{a_N} \log b_N,\,	\limsup_{N \rightarrow \infty} \frac{1}{a_N} \log c_N \right\},
	$$
	to prove \eqref{l2}, we only need to prove for any $\delta > 0$,
	\begin{equation}\label{a1}
	\limsup_{N \rightarrow \infty} \frac{1}{a_N} \log \mathbb{P}^N_\rho \left[ \left|\int_0^t \left(\eta_s(0) - \eta_s^{M,\rm{R}} (0)  \right) \left(1-\eta_s(-1)\right) G_s \,ds\right| > \delta\right] = - \infty,
	\end{equation}
	\begin{equation*}
	\limsup_{N \rightarrow \infty} \frac{1}{a_N} \log \mathbb{P}^N_\rho \left[ \left|\int_0^t \eta_s^{M,\rm{R}} (0)  (\eta_s^{M,\rm{L}} (-1)  - \eta_s(-1)) G_s \,ds\right| > \delta\right] = - \infty,
	\end{equation*}
	\begin{equation}\label{a2}
	\limsup_{N \rightarrow \infty} \frac{1}{a_N} \log \mathbb{P}^N_\rho \left[ \left|\int_0^t \left(\eta_s^{M,\rm{R}} (0) - \rho \right)\left(1-\eta_s^{M,\rm{L}} (-1) \right) G_s \,ds\right| > \delta\right] = - \infty,
	\end{equation}
	and
	\begin{equation*}
	\limsup_{N \rightarrow \infty} \frac{1}{a_N} \log \mathbb{P}^N_\rho \left[ \left|\int_0^t \rho \left(\rho - \eta_s^{M,\rm{L}} (-1) \right) G_s \,ds\right| > \delta\right] = - \infty.
	\end{equation*}
	We only prove \eqref{a1} and \eqref{a2}, since the remaining two terms are similar.
	
	\,
	
	For any $A > 0$,  by  Chebyshev's inequality, the formula on the left-hand side of \eqref{a1} is bounded from above by
	\begin{equation}\label{a3}
	-\frac{A \delta}{a_N} + \frac{1}{a_N} \log \mathbb{E}^N_\rho \left[ \exp \left\{ A   \left|\int_0^t \left(\eta_s(0) - \eta_s^{M,\rm{R}} (0)  \right) \left(1-\eta_s(-1)\right) G_s \,ds\right|\right\} \right].
	\end{equation}
	Since $e^{|x|} \leq e^x + e^{-x}$, we can remove the modulus in the expectation above. By the Feynman-Kac formula  (see \cite[Lemma A.1.7.2]{klscaling} by Kipnis and Landim for example), the second term in \eqref{a3} is bounded by
	\begin{equation*}
	\frac{1}{a_N} \int_0^t d s \sup_{\text{$f$ density}} \left\{ A G_s \int \left(\eta (0) - \eta^{M,\rm{R}} (0)  \right) \left(1-\eta(-1)\right) f (\eta) \,d \nu_\rho -   \mathcal{D}_N \left( f; \nu_\rho \right) \right\},
	\end{equation*}
	where $\mathcal{D}_N \left( f; \nu_\rho \right)$ is the Dirichlet form of $f$ associated with $\nu_\rho$ given by
	\begin{equation*}
	\begin{aligned}
	\mathcal{D}_N \left( f; \nu_\rho \right) &:= \left\langle \sqrt{f},  (- \mathcal{L}_N) \sqrt{f}\, \right\rangle_{\nu_\rho} \\
	& =  N \left[\sqrt{f (\eta^{-1,0})} - \sqrt{f (\eta)}\right]^2
	+ N^2 \sum_{x \in \mathbb{T}_N, \atop x \neq -1} \left[\sqrt{f (\eta^{x,x+1})} - \sqrt{f(\eta)}\right]^2.
	\end{aligned}
	\end{equation*}
	We first  write $\eta (0) - \eta^{M,\rm{R}} (0)$ as a telescope sum,
	\begin{equation*}
	\eta (0) - \eta^{M,\rm{R}} (0) = \frac{1}{M} \sum_{x=0}^{M-1} \sum_{y=0}^{x-1} \left( \eta(y) - \eta(y+1) \right).
	\end{equation*}
	Making the transformations $\eta \rightarrow \eta^{y,y+1}$,  by Cauchy-Schwartz inequality, we obtain that there exists a constant $C$ only depending on $G$ such that for any $B > 0$,
	\begin{align*}
	&A G_s \int \left(\eta (0) - \eta^{M,\rm{R}} (0)  \right) \left(1-\eta(-1)\right) f (\eta) \,d \nu_\rho \\
	& = \frac{A G_s}{2 M}    \sum_{x=0}^{M-1} \sum_{y=0}^{x-1}  \int \left( \eta(y) - \eta(y+1) \right)
	\left(1-\eta(-1)\right) \left(f(\eta) - f(\eta^{y,y+1})\right) d \nu_\rho\\
	& \leq \frac{A B ||G||_{\infty}}{4 M}    \sum_{x=0}^{M-1} \sum_{y=0}^{x-1} \int  \left(\sqrt{f(\eta)} - \sqrt{f(\eta^{y,y+1})}\right)^2 d \nu_\rho \\
	&\quad \quad \quad \quad \quad \quad + \frac{A ||G||_{\infty}}{4 B M}    \sum_{x=0}^{M-1} \sum_{y=0}^{x-1} \int  \left(\sqrt{f(\eta)} + \sqrt{f(\eta^{y,y+1})}\right)^2 d \nu_\rho\\
	&\leq  C   \left( \frac{A B }{ N^2}  \mathcal{D}_N \left( f; \nu_\rho \right) + \frac{A M}{B} \right).
	\end{align*}
	Taking $B = N^2 A^{-1} C^{-1}$, we bound \eqref{a3} by
	\begin{equation*}
	\inf_{A > 0} \,\left\{-\frac{A \delta}{a_N}  + \frac{ A^2 C^2 t  M}{N^2 a_N}\right\} = - \frac{\delta^2 N^2}{4 C^2 t M  a_N}.
	\end{equation*}
	We prove \eqref{a1} by  choosing $M$ such that $M a_N \ll N^2$.
	
	\,
	
	As above, for any $A > 0$, the formula on the left-hand side of \eqref{a2} is bounded  by
	\begin{align*}
	-\frac{A \delta}{a_N} + \frac{1}{a_N} \log \mathbb{E}^N_\rho \left[ \exp \left\{ A   \left| \int_0^t \left(\eta_s^{M,\rm{R}} (0) - \rho \right)\left(1-\eta_s^{M,\rm{L}} (-1) \right) G_s \,ds \right|\right\} \right].
	\end{align*}
	As before, we can first remove the modulus.  By Jensen's inequality and the invariance of the measure $\nu_\rho$, we bound the above formula by
	\begin{equation}\label{a4}
	-\frac{A \delta}{a_N} + \frac{1}{a_N} \log \left(\frac{1}{t} \int_0^t d s \,\rm{E}_{\nu_\rho} \left[ \exp \left\{ A  t G_s  \left(\eta^{M,\rm{R}} (0) - \rho \right)\left(1-\eta^{M,\rm{L}} (-1) \right)  \right\} \right] \right).
	\end{equation}
By Taylor's expansion, the expectation in the above formula is  less than or equal to
\begin{equation*}
\begin{aligned}
 &\sum_{k \geq 0} \frac{A^{2k} t^{2k} ||G||_\infty^{2k}}{(2k)!}  \,\rm{E}_{\nu_\rho} \left[ \left(\eta^{M,\rm{R}} (0) - \rho \right)^{2k}\right] \\
 &\quad \quad + \sum_{k \geq 0} \frac{A^{2k+1} t^{2k+1} ||G||_\infty^{2k+1}}{(2k+1)!}  \,\rm{E}_{\nu_\rho} \left[ \left|\eta^{M,\rm{R}} (0) - \rho \right|^{2k+1}\right] \\
 & \leq (1+ At||G||_\infty)\sum_{k \geq 0} \frac{A^{2k} t^{2k} ||G||_\infty^{2k}}{(2k)!}  \,\rm{E}_{\nu_\rho} \left[ \left(\eta^{M,\rm{R}} (0) - \rho \right)^{2k}\right].
 \end{aligned}
\end{equation*}
Some computations (see the appendix) show that there exists a constant $ C (\rho)$  such that
\begin{equation}\label{equ control of 2k moments}
 {\rm E}_{\nu_\rho} \left[ \left(\eta^{M,{\rm R}} (0) - \rho \right)^{2k} \right] \leq \frac{C(\rho)^k\,  k!}{M^k}.
\end{equation}
Since $2^k( k! )^2 \leq (2k)!$, the expectation  in \eqref{a4} is bounded by $C A \exp\{C A^2/M\}$ for some constant $C = C (t,G,\rho)$. Therefore, we bound \eqref{a4} by
\begin{equation*}
-\frac{A \delta}{a_N} + \frac{C  A^2}{M a_N} + \frac{\log C + \log A}{a_N}.
\end{equation*}
We finish the proof by taking $M = [N/2]$ and $A=  M \delta / (2 C)$.
\end{proof}

\,

\begin{lemma} \label{lem2}
For any $G \in C \left([0,T]\times [0,1]\right)$ and any $\delta,\,t > 0$,
	\begin{equation*}
	\begin{aligned}
	\limsup_{N \rightarrow \infty} \frac{1}{a_N} \log \mathbb{P}^N_\rho
	 \left[\left|\int_0^t  \frac{ 1}{N} \sum_{x \in \mathbb{T}_N,\atop x \neq -1}  \left[ (\eta_s (x) - \eta_s (x+1) )^2- 2 \rho(1-\rho) \right] G_s \left(\frac{x}{N}\right) \,d s \right| > \delta\right]
	= - \infty.
	\end{aligned}
	\end{equation*}
\end{lemma}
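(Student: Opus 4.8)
The plan is to mimic the strategy of Lemma \ref{lem1}: apply Chebyshev's exponential inequality, pass through the Feynman--Kac formula to reduce everything to a variational problem over densities, and then control the resulting expression using a telescoping/Dirichlet-form estimate together with the moment bound \eqref{equ control of 2k moments}. Write $V_s(\eta) := \frac{1}{N}\sum_{x\neq -1}[(\eta(x)-\eta(x+1))^2 - 2\rho(1-\rho)]\,G_s(x/N)$ for the centered quantity inside the time integral. The key observation is that $(\eta(x)-\eta(x+1))^2 = \eta(x)+\eta(x+1) - 2\eta(x)\eta(x+1)$, so each summand is a local function depending only on the occupation variables at $x$ and $x+1$, and under $\nu_\rho$ its expectation is exactly $2\rho(1-\rho)$. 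Thus $V_s$ is a sum of $\mathcal{O}(N)$ centered local functions, each weighted by a factor $G_s(x/N)/N$ of size $\mathcal{O}(1/N)$.

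First I would apply Chebyshev together with $e^{|x|}\le e^x + e^{-x}$ to remove the modulus, reducing \eqref{l2}-type bound to estimating
\begin{equation*}
\frac{1}{a_N}\log\mathbb{E}^N_\rho\left[\exp\left\{\pm A\int_0^t V_s(\eta_s)\,ds\right\}\right] - \frac{A\delta}{a_N}
\end{equation*}
for arbitrary $A>0$. Next, the Feynman--Kac formula bounds the logarithm of the exponential moment by $\int_0^t ds\,\sup_f\{\,A\langle V_s, f\rangle_{\nu_\rho} - \mathcal{D}_N(f;\nu_\rho)\,\}$ over densities $f$. The task is then to show this variational expression is of order $A^2/N$ (or better) uniformly in $s$, so that optimizing over $A$ with $A\sim N/a_N$ or similar yields the super-exponential rate. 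To estimate $A\langle V_s, f\rangle_{\nu_\rho}$, I would write each centered summand $(\eta(x)-\eta(x+1))^2 - 2\rho(1-\rho)$ as an off-diagonal term whose $\nu_\rho$-integral against $f$ can be symmetrized via the change of variables $\eta\to\eta^{x,x+1}$; crucially, this local function is \emph{invariant} under the swap $\eta\mapsto\eta^{x,x+1}$ (since $(\eta(x)-\eta(x+1))^2$ is symmetric in the two sites), so the naive symmetrization gives no Dirichlet-form gain. The correct route is therefore not through the Dirichlet form but through the moment bound: since $V_s$ is bounded and has small $\nu_\rho$-fluctuations, I would instead use Jensen's inequality and the invariance of $\nu_\rho$ (as in the derivation of \eqref{a4}) to bound the exponential moment by $\frac{1}{t}\int_0^t ds\,\mathrm{E}_{\nu_\rho}[\exp\{AtV_s\}]$, then expand in a Taylor series and control the moments of the sum $\frac{1}{N}\sum_x[(\eta(x)-\eta(x+1))^2-2\rho(1-\rho)]G_s(x/N)$ using independence of $\nu_\rho$ on disjoint bonds together with a bound analogous to \eqref{equ control of 2k moments}.

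The main obstacle, as just indicated, is that the local function here is symmetric under the relevant bond flip, so one cannot extract a Dirichlet-form factor the way one does for the current-type terms in Lemma \ref{lem1}; the cancellation must come from the product structure of $\nu_\rho$ and the $1/N$ prefactor rather than from the dynamics. The technical heart will be establishing that the $2k$-th $\nu_\rho$-moment of $N^{-1}\sum_x[(\eta(x)-\eta(x+1))^2-2\rho(1-\rho)]G_s(x/N)$ is bounded by $C(\rho,G)^k k!/N^k$ (up to the bookkeeping that adjacent bonds share a site and hence are only pairwise, not fully, independent), which after the inequality $2^k(k!)^2\le (2k)!$ gives an exponential moment of the form $C\exp\{CA^2/N\}$. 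Combining this with the Chebyshev term and optimizing over $A$ (taking $A$ of order $N$, so that $A^2/(Na_N)\sim N/a_N\to\infty$ while $A\delta/a_N\sim N\delta/a_N\to\infty$ faster after balancing) drives the whole expression to $-\infty$, using the hypothesis $a_N\ll N$. The estimate is uniform in $s\in[0,t]$ because $\|G\|_\infty<\infty$, so the time integral contributes only a harmless constant factor.
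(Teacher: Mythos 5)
Your proposal is correct and follows essentially the same route as the paper's proof: Chebyshev's exponential inequality, removal of the modulus via $e^{|x|}\leq e^x+e^{-x}$, Jensen's inequality combined with the invariance of $\nu_\rho$ to reduce to a static exponential moment under $\nu_\rho$, and a concentration bound of the form $C\exp\{CA^2/N\}$ exploiting the product structure of $\nu_\rho$ (your observation that the Dirichlet-form route fails because $(\eta(x)-\eta(x+1))^2$ is invariant under the swap $\eta\mapsto\eta^{x,x+1}$ is exactly why the paper also avoids it). The one loose end you flag --- that adjacent bonds share a site and are therefore not independent --- is closed in the paper by a Cauchy--Schwarz split of the sum into odd and even bonds, each of which is a sum of genuinely independent variables, after which the elementary inequality $e^x\leq 1+x+x^2/2+|x|^3e^{|x|}$ and the choice $A=\sqrt{Na_N}$ (your $A$ of order $N$ with a small constant also works) finish the proof.
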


\,

\begin{lemma}\label{lem3}
Let $G \in C [0,1]$. Then for any $t > 0$,
	\begin{equation}
	\limsup _{A \rightarrow \infty}\, \limsup _{N \rightarrow \infty} \,\frac{N}{a_N^2} \log \mathbb{P}^{N}_{\rho}  \left[ \sup_{0 \leqslant t \leqslant T} \left| \int_0^t \langle \mu^N_s, G\rangle \,d s \right| > A \right] = -\infty,
	\end{equation}
and for any $\epsilon,\,t > 0$,
\begin{equation}
\limsup_{\delta \rightarrow 0} \,\limsup_{N \rightarrow \infty}\, \frac{N}{a_N^2} \log \mathbb{P}^N_\rho \left[\sup_{|t-s| \leq \delta}  \left| \int_s^t \left\langle \mu^N_u, G \right\rangle \,d u   \right| > \epsilon \right] = - \infty.
\end{equation}
\end{lemma}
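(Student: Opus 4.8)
The plan is to deduce both displays from a single exponential estimate obtained through the Feynman--Kac formula, after first isolating the conserved (spatially constant) part of the test function. Write $c_N=\frac1N\sum_{x\in\mathbb{T}_N}G(x/N)$ and $\tilde G(x/N)=G(x/N)-c_N$, so that $\sum_x\tilde G(x/N)=0$. This splitting serves two purposes. First, because the total number of particles is conserved by the dynamics, $\langle\mu^N_s,c_N\rangle=\frac{c_N}{a_N}\sum_x(\eta_s(x)-\rho)$ does not depend on $s$; hence every time integral of it equals $\frac{c_N}{a_N}\sum_x(\eta_0(x)-\rho)$ times the length of the interval, and is controlled directly by a moderate deviation estimate for the i.i.d.\ sum under $\nu_\rho$ (an elementary exponential Chebyshev bound gives $\frac{N}{a_N^2}\log\mathbb{P}^N_\rho[|\sum_x(\eta_0(x)-\rho)|>Ka_N]\le-\frac{K^2}{2\rho(1-\rho)}(1+o(1))$, which tends to $-\infty$ as $K\to\infty$). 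Second, since $\tilde G$ has zero discrete average, the discrete Poisson equation $c_x-c_{x-1}=\tilde G(x/N)$ is solvable on $\mathbb{T}_N$, so that $\frac{1}{a_N}\sum_x\tilde G(x/N)(\eta(x)-\rho)=\frac{1}{a_N}\sum_x c_x(\eta(x)-\eta(x+1))$ becomes a sum of bond gradients with $|c_x|=\mathcal{O}(N)$ and $\sum_x c_x^2=\mathcal{O}(N^3)$.

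The key estimate I would establish is that, for every $\lambda\in\mathbb{R}$ and all $0\le s\le t\le T$,
\[
\limsup_{N\to\infty}\frac{N}{a_N^2}\log\mathbb{E}^N_\rho\Big[\exp\Big\{\tfrac{a_N^2}{N}\lambda\int_s^t\langle\mu^N_u,\tilde G\rangle\,du\Big\}\Big]\le\kappa\,\lambda^2\,(t-s)
\]
for some constant $\kappa=\kappa(G,\rho)$. By stationarity of $\nu_\rho$ it suffices to take $s=0$. The Feynman--Kac bound (\cite[Lemma A.1.7.2]{klscaling}) gives $\log\mathbb{E}^N_\rho[\cdots]\le (t-s)\sup_f\{\,b_N\!\int\langle\mu^N,\tilde G\rangle f\,d\nu_\rho-\mathcal{D}_N(f;\nu_\rho)\}$, with $b_N=\frac{a_N^2}{N}\lambda$ and the supremum over $\nu_\rho$-densities $f$. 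Inserting the bond-gradient representation and carrying out, bond by bond, exactly the transformation $\eta\to\eta^{x,x+1}$ and Cauchy--Schwarz step used in the proof of Lemma \ref{lem1}, I would tune the free parameter on each bond so that all Dirichlet contributions are absorbed by $-\mathcal{D}_N(f;\nu_\rho)$. The leftover is of order $\frac{b_N^2}{a_N^2N^2}\sum_x c_x^2$, which by $\sum_x c_x^2=\mathcal{O}(N^3)$ is of order $\frac{a_N^2}{N}\lambda^2$, yielding the claimed quadratic-in-$\lambda$, linear-in-$(t-s)$ bound. The slow bond contributes a single term of the same order (its Dirichlet weight is $N$ rather than $N^2$, exactly compensated by an extra factor $a_N/N$), so it is harmless.

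From here both displays follow without any running-supremum-within-a-cell difficulty, thanks to the elementary bounds $\sup_{0\le t\le T}|\int_0^t\langle\mu^N_s,G\rangle ds|\le\int_0^T|\langle\mu^N_s,G\rangle|\,ds$ and, for any window, $|\int_s^t\langle\mu^N_u,G\rangle du|\le\int_s^t|\langle\mu^N_u,G\rangle|\,du$. For the first display I would bound the probability by $\mathbb{P}^N_\rho[\int_0^T|\langle\mu^N_s,G\rangle|ds>A]$, split $|\langle\mu^N,G\rangle|\le|\langle\mu^N,c_N\rangle|+|\langle\mu^N,\tilde G\rangle|$, and treat the two pieces by the conservation/moderate-deviation estimate and by the key estimate respectively (removing the modulus via $e^{|x|}\le e^x+e^{-x}$ and optimizing the Chebyshev parameter $\lambda$); letting $A\to\infty$ sends the rate to $-\infty$. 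For the second display I would cover $[0,T]$ by $\lceil T/\delta\rceil$ cells of length $\delta$; any pair with $|t-s|\le\delta$ lies in two consecutive cells, so the supremum is at most $\max_i\int_{(i-1)\delta}^{(i+1)\delta}|\langle\mu^N_u,G\rangle|du$, and the union bound costs only $\frac{N}{a_N^2}\log(T/\delta)\to0$ because $a_N^2\gg N$ and $\delta$ is fixed before $N\to\infty$. Each cell integral is split and estimated as above, and the quadratic bound $\kappa\lambda^2(2\delta)$ permits the choice $\lambda\sim\epsilon/(\kappa\delta)$, making both rates behave like $-\epsilon^2/(\kappa\delta)$, which tends to $-\infty$ as $\delta\to0$.

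The main obstacle is the key exponential estimate itself: the empirical density $\langle\mu^N,G\rangle$ is a function of the occupation variables rather than of currents, and so is not a priori controlled by the Dirichlet form. Isolating the conserved component and rewriting the remainder as bond gradients is precisely what makes the Feynman--Kac variational problem tractable; the delicate point is the bookkeeping of the powers of $N$, ensuring the bound is $\mathcal{O}(\lambda^2(t-s))$ uniformly in $N$ rather than blowing up, and checking that the slow bond does not upset this balance.
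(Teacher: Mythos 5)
The paper does not actually write out a proof of this lemma (it defers to Lemmas 2.1 and 2.2 of Gao--Quastel), so the comparison is with the standard argument it invokes. Your decomposition $G=c_N+\tilde G$ is exactly right and is genuinely needed: the conserved direction $\sum_x(\eta(x)-\rho)$ has zero Dirichlet form, so the variational formula is useless for it, and it must be handled through the initial i.i.d.\ bound as you do. Your key exponential estimate for the \emph{signed} integral $\int_s^t\langle\mu^N_u,\tilde G\rangle\,du$ is also correct: the bookkeeping $|c_x|=\mathcal{O}(N)$, $\sum_x c_x^2=\mathcal{O}(N^3)$, the leftover of order $a_N^2\lambda^2/N$ per unit time, and the check that the slow bond (weight $N$ with $c_{-1}^2=\mathcal{O}(N^2)$, giving $a_N^2\lambda^2 c_{-1}^2/N^3=\mathcal{O}(a_N^2\lambda^2/N)$) contributes at the same order are all sound.

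The gap is in the passage from this key estimate to the two suprema in the statement. Bounding $\sup_{0\le t\le T}\bigl|\int_0^t\langle\mu^N_s,\tilde G\rangle\,ds\bigr|$ by $\int_0^T\bigl|\langle\mu^N_s,\tilde G\rangle\bigr|\,ds$ is a valid inequality, but it places the modulus \emph{inside} the time integral, and at that point your key estimate no longer applies: the inequality $e^{|x|}\le e^x+e^{-x}$ only removes a modulus sitting outside the whole integral, while the Feynman--Kac variational problem for $V=\lambda|\langle\mu^N,\tilde G\rangle|$ cannot be linearized by the change of variables $\eta\to\eta^{x,x+1}$, so the integration-by-parts step that produces the $\mathcal{O}(a_N^2\lambda^2/N)$ bound breaks down. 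Conversely, if you keep the modulus outside (so that $e^{|x|}\le e^x+e^{-x}$ applies), you only control the integral up to the fixed time $T$, not the running supremum over $t$; the same problem recurs inside each cell of your $\delta$-cover in the second display. What is missing is the maximal version of the exponential Chebyshev/Feynman--Kac inequality (Kipnis--Landim, Appendix 1.7, the ingredient Gao--Quastel use at precisely this point): via the positive supermartingale built from $e^{t(\mathcal{L}_N+V)}\mathbf{1}$ one gets
$\mathbb{P}^N_\rho\bigl[\sup_{0\le t\le T}\int_0^tV(\eta_s)\,ds\ge A\bigr]\le \exp\{-A+T\sup_f(\int Vf\,d\nu_\rho-\mathcal{D}_N(f;\nu_\rho))\}$
for the \emph{signed} $V=\pm\lambda\frac{a_N^2}{N}\langle\mu^N,\tilde G\rangle$, after which your variational computation applies verbatim and both displays follow as you describe. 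A naive alternative---discretizing the supremum over $t$ and using the deterministic bound $|\langle\mu^N_s,G\rangle|\le N\|G\|_\infty/a_N$ on each small cell---does not work uniformly over all admissible $a_N$, since the union-bound cost $\frac{N}{a_N^2}\log(\#\text{cells})$ need not vanish when $a_N^2/N$ grows slower than $\log N$. So the approach is the right one, but the maximal inequality must be supplied.
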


\,

The  proof  of  \cite[Lemmas 2.1 and 2.2]{gao2003moderate} also applies to the above two lemmas.  The main ingredients are the invariance of the Bernoulli product measure $\nu_\rho$. For that reason we omit the proof.

\section{Upper Bound}\label{sec upper}

In this section, we prove \eqref{eqnupperbound} the moderate deviations upper bound.  The strategy is first proving upper bound over compact sets, and then extending to closed sets, which follows from the exponential tightness.

\,

Fix $G \in {\mathscr{G}}$.  By Feynman-Kac formula (see \cite[A.1.7]{klscaling}),
\begin{equation}\label{martingale1}
M_t^N (G):= \frac{f(t,\eta_t)}{f(0,\eta_0)} \exp \left\{ - \int_0^t \frac{\partial_s f + \mathcal{L}_N f }{f} (s,\eta_s) \,d s\right\}
\end{equation}
is a positive mean-one martingale, where
\begin{equation*}
f (t,\eta) := f_G (t,\eta) :=   \exp \left\{ \frac{a_N}{N} \sum_{x \in \mathbb{T}_N} \left(\eta (x) - \rho\right) G_t \left(x/N\right)\right\}.
\end{equation*}
Notice that
$$
f (t,\eta_t) = \exp \left\{ \frac{a_N^2}{N} \left\langle \mu^N_t,G_t  \right\rangle \right\}.
$$
A simple calculation yields that
\begin{align*}
(\partial_s f + \mathcal{L}_N f) (s,\eta_s) &= f (s,\eta_s)  \Bigg( \frac{a_N^2}{N} \langle \mu^N_s,\partial_s G_s \rangle \\
&+ N \left[ \exp \left\{\frac{a_N}{N} (\eta_s (-1) - \eta_s (0)) \left( G_s \left(\frac{0}{N}\right) - G_s \left(\frac{-1}{N}\right) \right)\right\} -1 \right]\\
&\left. + \sum_{x \in \mathbb{T}_N,\atop x \neq -1} N^2 \left[ \exp \left\{\frac{a_N}{N} (\eta_s (x) - \eta_s (x+1)) \left( G_s \left(\frac{x+1}{N}\right) - G_s \left(\frac{x}{N}\right) \right)\right\} -1 \right]  \right).
\end{align*}
By Taylor's expansion,
\begin{align*}
&N \left[ \exp \left\{\frac{a_N}{N} (\eta_s (-1) - \eta_s (0)) \left( G_s \left(\frac{0}{N}\right) - G_s \left(\frac{-1}{N}\right) \right)\right\} -1 \right]\\
&= a_N (\eta_s (-1) - \eta_s (0)) \left( G_s \left(\frac{0}{N}\right) - G_s \left(\frac{-1}{N}\right) \right) \\
&+ \frac{a_N^2}{2 N}  (\eta_s (-1) - \eta_s (0))^2 \left( G_s \left(\frac{0}{N}\right) - G_s \left(\frac{-1}{N}\right) \right)^2 + \mathcal{O}_G \left(\frac{a_N^3}{N^2}\right),
\end{align*}
and for $x \neq -1$,
\begin{align*}
&N^2 \left[ \exp \left\{\frac{a_N}{N} (\eta_s (x) - \eta_s (x+1)) \left( G_s \left(\frac{x+1}{N}\right) - G_s \left(\frac{x}{N}\right) \right)\right\} -1\right]\\
&= N a_N  (\eta_s (x) - \eta_s (x+1)) \left( G_s \left(\frac{x+1}{N}\right) - G_s \left(\frac{x}{N}\right) \right)\\
&+ \frac{a_N^2}{2} (\eta_s(x) - \eta_s(x+1))^2 \left( G_s \left(\frac{x+1}{N}\right) - G_s \left(\frac{x}{N}\right)^2 \right) + \mathcal{O}_G \left(\frac{ a_N^3}{N^4}\right).
\end{align*}
Using the summation by parts formula,
\begin{align}
M_t^N (G) &= \exp \frac{a_N^2}{N} \bigg\{ \langle \mu^N_t, G_t\rangle - \langle \mu^N_0, G_0\rangle  - \int_0^t \langle \mu^N_s, (\partial_s + \tilde{\Delta}) G_s\rangle ds \label{r0}\\
&- \int_0^t \frac{N}{a_N} (\eta_s (-1) - \eta_s (0)) \left( G_s \left(\frac{0}{N}\right) - G_s \left(\frac{-1}{N}\right) \right) d s\label{r1} \\
&- \int_0^t \frac{1}{2}  (\eta_s (-1) - \eta_s (0))^2 \left( G_s \left(\frac{0}{N}\right) - G_s \left(\frac{-1}{N}\right) \right)^2 ds \label{r2}\\
&- \int_0^t \frac{N}{a_N} \left( (\eta_s (0) - \rho) \nabla_N G_s \left(\frac{0}{N}\right) - (\eta_s (-1) - \rho) \nabla_N G_s \left(\frac{-2}{N}\right)\right) ds \label{r3}\\
&- \int_0^t \frac{1}{2 N} \sum_{x \in \mathbb{T}_N,\atop x \neq -1}   (\eta_s (x) - \eta_s (x+1))^2 \left(\nabla_N G_s \left(\frac{x}{N}\right)\right)^2 d s \label{r4}\\
&+ \mathcal{O}_G \left(\frac{a_N}{N}\right) + \mathcal{O}_G \left(\frac{ 1}{a_N}\right) \bigg\},\label{r5}
\end{align}
where $\nabla_N$ is the discrete space derivative, $\nabla_N G_s (x/N) := N \left[ G_s \left((x+1)/N\right) - G_s (x/N) \right]$. By the boundary condition \eqref{bc} imposed on $G$, the sum of \eqref{r1} and \eqref{r3} is of order $\mathcal{O}_G \left( a_N^{-1} \right)$. Therefore,
\begin{equation*}
\begin{aligned}
M_t^N (G) &= \exp \frac{a_N^2}{N} \left\{ \ell_t (\mu^N) -  \int_0^t \frac{1}{2}  (\eta_s (-1) - \eta_s (0))^2 \left( G_s \left(\frac{0}{N}\right) - G_s \left(\frac{-1}{N}\right) \right)^2 ds\right.\\
&\left.- \int_0^t \frac{1}{2 N} \sum_{x \in \mathbb{T}_N,\atop x \neq -1}   (\eta_s (x) - \eta_s (x+1))^2 \left(\nabla_N G_s \left(\frac{x}{N}\right)\right)^2 d s + \mathcal{O}_G \left(\frac{ 1}{a_N}\right)  \right\}.
\end{aligned}
\end{equation*}

\,

\begin{lemma}[Upper bounds over compact sets]
For any compact  set $K \subset D \left( [0,T], \mathscr{M} \right) $,
\begin{equation}
\limsup_{N \rightarrow \infty} \, \frac{N}{a_N^2} \log Q^N_\rho [K]  \leq - \inf_{\mu \in K} I(\mu).
\end{equation}
\end{lemma}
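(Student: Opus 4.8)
The plan is to exploit the explicit exponential martingale $M^N_t(G)$, $G\in\mathscr{G}$, recorded in \eqref{martingale1}, to tilt the initial measure by an independent exponential weight indexed by $\gamma\in\mathscr{G}_0$, and then to run a Chebyshev (change-of-measure) argument whose output is optimized over the test functions through the minimax theorem. The first step is to remove the two quadratic functionals \eqref{r2} and \eqref{r4} from the exponent of $M^N_T(G)$. Since $(\eta_s(-1)-\eta_s(0))^2=\eta_s(-1)(1-\eta_s(0))+\eta_s(0)(1-\eta_s(-1))$ and $G_s(0/N)-G_s(-1/N)\to G_s(0)-G_s(1)$, Lemma \ref{lem1} replaces \eqref{r2} by $\rho(1-\rho)\int_0^T(G_s(0)-G_s(1))^2\,ds$, while Lemma \ref{lem2} together with $\nabla_N G_s\to\partial_u G_s$ replaces \eqref{r4} by $\rho(1-\rho)\int_0^T\!\int_0^1(\partial_u G_s(u))^2\,du\,ds$. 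These replacement estimates hold at rate $a_N$, and since $a_N\ll N$ one has $\frac{N}{a_N^2}=\frac{N}{a_N}\cdot\frac1{a_N}$ with $N/a_N\to\infty$, so each such error, as well as the remainders $\mathcal O_G(a_N^{-1})$ in \eqref{r5}, remains super-exponentially negligible at the moderate scale $N/a_N^2$. Writing $\Phi_G(\mu):=\ell_T(\mu,G)-\rho(1-\rho)\int_0^T(G_t(0)-G_t(1))^2\,dt-\rho(1-\rho)\int_0^T\!\int_0^1(\partial_uG_t(u))^2\,du\,dt$, this yields $M^N_T(G)=\exp\{\frac{a_N^2}{N}(\Phi_G(\mu^N)+\mathcal E^{(1)}_N)\}$ with $\mathcal E^{(1)}_N$ super-exponentially small.

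Next I would treat the initial layer. For $\gamma\in\mathscr{G}_0$ set $W^N_0(\gamma):=\exp\{\frac{a_N^2}{N}\mu^N_0(\gamma)\}/Z^N_\gamma$ with $Z^N_\gamma:={\rm E}_{\nu_\rho}[\exp\{\frac{a_N^2}{N}\mu^N_0(\gamma)\}]$; by independence of the $\eta_0(x)$ under $\nu_\rho$ and a Taylor expansion one gets $\frac{N}{a_N^2}\log Z^N_\gamma\to\frac{\rho(1-\rho)}2\int_0^1\gamma^2(u)\,du$. As $W^N_0(\gamma)$ is $\mathcal F_0$-measurable and $M^N_\cdot(G)$ is a mean-one martingale with $M^N_0(G)=1$, the product $W^N_0(\gamma)M^N_T(G)$ has $\mathbb E^N_\rho$-mean one. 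Putting $F_{\gamma,G}(\mu):=\mu_0(\gamma)-\frac{\rho(1-\rho)}2\int_0^1\gamma^2+\Phi_G(\mu)$, we may write $W^N_0(\gamma)M^N_T(G)=\exp\{\frac{a_N^2}N(F_{\gamma,G}(\mu^N)+\mathcal E_N)\}$. Let $B_\delta$ be the event on which all replacement errors of the first step are below $\delta$; by Lemmas \ref{lem1} and \ref{lem2} the complement $B_\delta^c$ is negligible at rate $N/a_N^2$, and on $B_\delta$, for $N$ large, $\mathcal E_N\ge-c\delta$. Splitting $Q^N_\rho[K]\le Q^N_\rho[K\cap B_\delta]+\mathbb P^N_\rho[B_\delta^c]$ and using $1=\mathbb E^N_\rho[W^N_0(\gamma)M^N_T(G)]\ge\exp\{\frac{a_N^2}N(\inf_{\nu\in K}F_{\gamma,G}(\nu)-c\delta)\}\,Q^N_\rho[K\cap B_\delta]$, I obtain, after sending $N\to\infty$ and then $\delta\to0$, for every $\gamma\in\mathscr{G}_0$ and $G\in\mathscr{G}$,
\[
\limsup_{N\to\infty}\frac N{a_N^2}\log Q^N_\rho[K]\le-\inf_{\nu\in K}F_{\gamma,G}(\nu).
\]

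Optimizing over the test functions gives the bound $-\sup_{\gamma,G}\inf_{\nu\in K}F_{\gamma,G}(\nu)$, so the remaining task is to exchange the supremum and the infimum. For fixed $(\gamma,G)$ the map $\nu\mapsto F_{\gamma,G}(\nu)$ is affine on the compact set $K$, while for fixed $\nu$ the map $(\gamma,G)\mapsto F_{\gamma,G}(\nu)$ is concave (quadratic-concave in $\gamma$ and in $G$, affine in the linear terms); hence the minimax theorem, Theorem \ref{thm minimax}, permits the exchange. Since $\sup_{\gamma\in\mathscr{G}_0}\{\mu_0(\gamma)-\frac{\rho(1-\rho)}2\int_0^1\gamma^2\}=I_{ini}(\mu_0)$ and $\sup_{G\in\mathscr{G}}\Phi_G(\mu)=I_{dyn}(\mu)$, and $\gamma,G$ vary independently, we have $\sup_{\gamma,G}F_{\gamma,G}(\nu)=I(\nu)$, and therefore $\limsup_N\frac N{a_N^2}\log Q^N_\rho[K]\le-\inf_{\nu\in K}I(\nu)$, as claimed.

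The main obstacle is precisely the minimax exchange: the compact set $K$ need not be convex, so classical Sion-type statements do not apply directly and one must rely on the specific version isolated as Theorem \ref{thm minimax}; one must also verify the (semi)continuity of $\nu\mapsto\ell_T(\nu,G)$ on $D([0,T],\mathscr M)$ in the Skorohod topology, the endpoint evaluations $\mu\mapsto\mu_0,\mu_T$ being the delicate terms. A secondary technical point is ensuring that the replacement errors and the $\mathcal O_G(a_N^{-1})$ remainders are uniformly (over the path) super-exponentially negligible at the scale $N/a_N^2$, which is exactly where the gap $a_N\ll N$ enters.
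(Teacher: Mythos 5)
Your proposal is correct and follows essentially the same route as the paper: the replacement of the two quadratic functionals via Lemmas \ref{lem1} and \ref{lem2} on an event $B_{N,\delta}$ whose complement is negligible at scale $N/a_N^2$, the Chebyshev/change-of-measure bound built from $M^N_T(G)$ together with the initial tilt by $\gamma$ (your normalization by $Z^N_\gamma$ is just the paper's computation of $\mathbb{E}^N_\rho[M^N_T(G)\exp\{\tfrac{a_N^2}{N}\langle\mu^N_0,\gamma\rangle\}]$ phrased upfront), and the exchange of supremum and infimum via Theorem \ref{thm minimax} applied to $-F_{\gamma,G}$, affine in $\mu$ on the compact $K$ and convex in $(G,\gamma)$.
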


\begin{proof}
For any $\delta > 0$ and any $G \in {\mathscr{G}}$, let
\begin{equation*}
\begin{aligned}
B_{N,\delta} &=  \left\{ \left|   \int_0^T \sum_{x \in \mathbb{T}_N,\atop x \neq -1}  \frac{1}{2 N} (\eta_t (x) - \eta_t (x+1))^2 \left(\nabla_N G_t \left(\frac{x}{N}\right)\right)^2 d t - \int_0^T \int_0^1 \rho (1-\rho) \left(\nabla G_t (u)\right)^2 \,dt \,du\right| <  \delta \right\}\\
&\bigcap \left\{ \left| \int_0^T \left[ \frac{1}{2}  (\eta_t (-1) - \eta_t (0))^2 - \rho (1-\rho) \right] \left( G_t \left(\frac{0}{N}\right) - G_t \left(\frac{-1}{N}\right) \right)^2 d t \right| < \delta  \right\}.
\end{aligned}
\end{equation*}
By Lemmas \ref{lem1},\,\ref{lem2} and the assumption $a_N \ll N$,
\begin{equation*}
\limsup_{N \rightarrow \infty} \frac{N}{a_N^2} \log \mathbb{P}^N_\rho [B_{N,\delta}^c] = - \infty.
\end{equation*}
Therefore, for any $\gamma \in \mathscr{G}_0$,
\begin{equation*}
\begin{aligned}
\limsup_{N \rightarrow \infty} \,\frac{N}{a_N^2} \log \mathbb{P}^N_\rho [\mu^N \in K]
& \leq \limsup_{N \rightarrow \infty} \frac{N}{a_N^2} \log \mathbb{P}^N_\rho \left[\left\{\mu^N \in K \right\} \cap B_{N,\delta}  \right]\\
&=  \limsup_{N \rightarrow \infty} \frac{N}{a_N^2} \log \mathbb{E}^N_\rho \left[ \left(M_T^N (G)\right)^{-1} M_T^N (G) \mathbf{1}_{ \left\{\mu^N \in K\right\} \bigcap B_{N,\delta} } \right]\\
&\leq \sup_{\mu \in K} \left\{- \ell_T (\mu)  + \rho (1-\rho) \int_0^T \left(G_t (0) - G_t (1) \right)^2 dt   \right.\\
&\qquad \qquad \left. + \rho (1-\rho) \int_0^T \int_0^1 \left( \partial_u G_t (u) \right)^2 \,d u \,d t - \mu_0 (\gamma ) \right\} + \mathcal{O} (\delta)\\
& + \limsup_{N \rightarrow \infty} \frac{N}{a_N^2} \log \mathbb{E}^N_\rho \left[  M_T^N (G) \exp \left\{ \frac{a_N^2}{N} \langle \mu^N_0,\gamma \rangle \right\}\right].
\end{aligned}
\end{equation*}
Because $\{M_t^N (G)\}$ is a mean one martingale and $\nu_\rho$ is a product measure, direct calculations yield that
\begin{equation*}
\limsup_{N \rightarrow \infty} \frac{N}{a_N^2} \log \mathbb{E}^N_\rho \left[  M_T^N (G) \exp \left\{ \frac{a_N^2}{N} \langle \mu^N_0,\gamma \rangle \right\}\right] = \frac{\rho (1-\rho)}{2} \int_0^1 \gamma (u)^2 \,d u.
\end{equation*}
Letting $\delta \rightarrow 0$,
and then minimizing over $G \in {\mathscr{G}}, \gamma \in \mathscr{G}_0$,
\begin{equation*}
\begin{aligned}
&\limsup_{N \rightarrow \infty} \frac{N}{a_N^2} \log \mathbb{P}^N_\rho [\mu^N \in K]
\leq \inf_{G \in {\mathscr{G}}, \atop \gamma \in \mathscr{G}_0} \,\sup_{\mu \in K}\,
\left\{ - \ell_T (\mu)  + \rho (1-\rho) \int_0^T \left(G_t (0) - G_t (1) \right)^2 dt \right.\\
&\qquad \left. + \rho (1-\rho) \int_0^T \int_0^1 \left( \partial_u G_t (u) \right)^2 \,d u \,d t -  \mu_0 (\gamma)  +  \frac{\rho (1-\rho)}{2} \int_0^1 \gamma (u)^2 \,d u \right\}.
\end{aligned}
\end{equation*}

\,

In order to exchange the supremum and infimum  above, we use the following  version of  Minimax Theorem proved by Nikaid{\^o}.

\,

\begin{theorem}[Minimax Theorem, {\cite[Theorem 1]{nikaido1953minimax}}]\label{thm minimax}
	Let $\mathbf{X}$ be a linear space endowed with separative topology and $\mathbf{Y}$ a linear space. Moreover, assume $\mathbf{X}$ is compact.  Let $f: \mathbf{X} \times \mathbf{Y} \rightarrow \mathbb{R}$ satisfy that $f (x,y)$ is convex   in $y$ for each fixed $x$, and   concave  in $x$ for each fixed $y$.  Furthermore,  $f (x,y)$ is continuous in $x$ for each fixed $y$. If $\sup_{x \in \mathbf{X}}  \, \inf_{y \in \mathbf{Y}} \, f (x,y)$ is finite, then
	\begin{equation*}
	\sup _{x \in \mathbf{X}} \, \inf _{y \in \mathbf{Y}} \, f(x, y)=\inf _{y \in \mathbf{Y}} \, \sup_{x \in \mathbf{X}} \, f(x, y).
	\end{equation*}
\end{theorem}

\,

We finish the proof by taking $\mathbf{X} = K \subset D \left( [0,T], \mathscr{M}\right),$ \,$\mathbf{Y} = \mathscr{G} \times \mathscr{G}_0$  and
\begin{align*}
f\left(\mu, (G, \gamma)\right)&= - \ell_T (\mu)  + \rho (1-\rho) \int_0^T \left(G_t (0) - G_t (1) \right)^2 dt \\
&+\rho (1-\rho) \int_0^T \int_0^1 \left( \partial_u G_t (u) \right)^2 \,d u \,d t -  \mu_0 (\gamma)  +  \frac{\rho (1-\rho)}{2} \int_0^1 \gamma (u)^2 \,d u
\end{align*}
for any $\mu\in \mathbf{X}$ and $(G, \gamma)\in \mathbf{Y}$.
\end{proof}

\,

To extend the moderate deviations  upper bound to any closed set, it suffices to show the exponential tightness of the  sequence $\{Q_N\}_{ N \geq 1}$, which follows from the following Lemma as in \cite{gao2003moderate}.

\,

\begin{lemma}\label{exptight}
For any $G \in \mathscr{G}_0$,
\begin{equation}\label{b1}
\limsup _{A \rightarrow \infty} \, \limsup _{N \rightarrow \infty}\, \frac{N}{a_N^2} \log \mathbb{P}^{N}_{\rho}\left(\sup _{0 \leqslant t \leqslant T}\left|\left\langle\mu_{t}^{N}, G\right\rangle\right|>A\right)=-\infty,
\end{equation}
and  for any $\epsilon > 0$,
\begin{equation}\label{u0}
\limsup _{\delta \rightarrow 0} \, \limsup _{N \rightarrow \infty} \,\frac{N}{a_N^2} \log  \mathbb{P}^{N}_{\rho}\left(\sup _{0<t \leqslant \delta}\left|\left\langle\mu_{t}^{N}-\mu_{0}^{N}, G\right\rangle\right|>\varepsilon\right)=-\infty.
\end{equation}
\end{lemma}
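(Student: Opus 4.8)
The plan is to read off both estimates from the exponential martingale $M_t^N(G)$ of \eqref{martingale1}, following the strategy of \cite[Lemmas 2.1, 2.2]{gao2003moderate}. Since $G \in \mathscr{G}_0$ is time-independent, it may be viewed as an element of $\mathscr{G}$, so the expansion \eqref{r0}--\eqref{r5} applies with $\partial_s G = 0$ and rearranges to the pointwise identity
\begin{equation*}
\langle \mu^N_t, G \rangle - \langle \mu^N_0, G \rangle = \frac{N}{a_N^2} \log M_t^N (G) + \int_0^t \langle \mu^N_s, \tilde{\Delta} G \rangle \, ds + \Xi_t^N (G),
\end{equation*}
where $\Xi_t^N (G)$ gathers the quadratic corrections \eqref{r2} and \eqref{r4} together with the error terms \eqref{r5}. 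Because $(\eta_s(x) - \eta_s(x+1))^2 \leq 1$, the quadratic terms are bounded deterministically by $C(G)\,t$ uniformly in the configuration, while the errors are $o(1)$ uniformly in $t \in [0,T]$; hence $|\Xi_t^N (G)| \leq C(G)\,t + o(1)$ with a constant independent of $\eta$. For $G \in \mathscr{G}_0$ the function $\tilde{\Delta} G = G''$ is continuous on $[0,1]$, so the integral term is controlled by Lemma \ref{lem3}. Replacing $G$ by $-G$ takes care of the absolute values, so it suffices to bound the one-sided suprema.

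For the first estimate \eqref{b1}, I would use the identity in the form $\langle \mu^N_t, G \rangle = \frac{N}{a_N^2} \log M_t^N (G) + \langle \mu^N_0, G \rangle + \int_0^t \langle \mu^N_s, \tilde{\Delta} G \rangle \, ds + \Xi_t^N(G)$. On the event $\{ \sup_t \langle \mu^N_t, G \rangle > A \}$ one of the four summands must exceed $A/4$; the deterministic term $\Xi$ forces an empty event once $A > 4 C(G) T + 1$ (and $N$ is large enough that $o(1)<1$). The martingale term is handled by Doob's maximal inequality for the mean-one positive martingale: $\mathbb{P}^N_\rho ( \sup_t M_t^N(G) > e^{A a_N^2 / (4N)} ) \leq e^{-A a_N^2 / (4N)}$, contributing $-A/4$. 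The initial term $\langle \mu^N_0, G \rangle = a_N^{-1} \sum_x (\eta_0(x) - \rho) G(x/N)$ is, under the product measure $\nu_\rho$, a sum of independent bounded centered variables, so a Chebyshev bound with its exponential moment gives $\frac{N}{a_N^2} \log \mathbb{P}^N_\rho( |\langle \mu^N_0, G\rangle| > A/4 ) \leq -\theta A/4 + C \theta^2 \int_0^1 G^2$ for every $\theta > 0$. The integral term is exactly the first display of Lemma \ref{lem3} applied to $\tilde{\Delta} G \in C[0,1]$. Taking the maximum of the three bounds, then $\limsup_N$ followed by $\limsup_A$, sends each to $-\infty$.

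For the oscillation estimate \eqref{u0}, the same decomposition applies, but now I would scale by a parameter $\lambda > 0$, writing
\begin{equation*}
\langle \mu^N_t, G \rangle - \langle \mu^N_0, G \rangle = \frac{1}{\lambda} \frac{N}{a_N^2} \log M_t^N (\lambda G) + \int_0^t \langle \mu^N_s, \tilde{\Delta} G \rangle \, ds + \frac{1}{\lambda} \Xi_t^N (\lambda G).
\end{equation*}
For $t \leq \delta$ the last term is at most $\lambda C(G) \delta + o(1)$, and by Doob the event $\{ \sup_{t \leq \delta} \frac{1}{\lambda} \frac{N}{a_N^2} \log M_t^N (\lambda G) > \epsilon/3 \}$ has probability whose $\frac{N}{a_N^2} \log$ is at most $-\lambda \epsilon / 3$. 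The integral term is bounded by the second display of Lemma \ref{lem3} with $s = 0$. The crucial point is to couple $\lambda$ to $\delta$: choosing $\lambda = \delta^{-1/2}$ makes the Doob contribution $-\epsilon \delta^{-1/2}/3 \to -\infty$ while the deterministic term $C(G)\delta^{1/2} \to 0$ (hence $< \epsilon/3$ for small $\delta$). Splitting the event $\{ \sup_{t \leq \delta} (\langle \mu^N_t, G\rangle - \langle \mu^N_0, G\rangle) > \epsilon \}$ accordingly and taking $\limsup_N$ then $\limsup_\delta$ yields the claim.

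The main obstacle is precisely this last coupling. With $\lambda$ fixed the Doob estimate only produces a finite negative constant $-\lambda\epsilon/3$, so the martingale contribution does not vanish as $\delta \to 0$; one must let $\lambda \to \infty$ to suppress it, yet the deterministic quadratic correction $\frac{1}{\lambda}\Xi_t^N(\lambda G) \approx \lambda\, C(G)\delta$ grows with $\lambda$. Balancing these two effects — choosing $\lambda = \lambda(\delta) \to \infty$ with $\lambda \delta \to 0$ — is the one genuinely non-routine choice in the argument; all remaining steps are the boundedness of $\Xi_t^N$, Doob's inequality, the product-measure moment bound, and Lemma \ref{lem3}.
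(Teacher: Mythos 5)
Your proposal is correct and follows essentially the same route as the paper: both read the two estimates off the exponential martingale expansion \eqref{r0}--\eqref{r5}, splitting into the martingale term (Doob's maximal inequality), the initial term (exponential Chebyshev under the product measure), the time integral of $\langle \mu^N_s,\tilde\Delta G\rangle$ (Lemma \ref{lem3}), and the deterministically bounded quadratic corrections. The only cosmetic differences are that you couple the scaling parameter to $\delta$ (via $\lambda=\delta^{-1/2}$) where the paper keeps $A$ fixed and sends $A\to\infty$ as a final outer limit, and that you use the first-moment form of Doob's inequality where the paper invokes the second moment of $M^N_T(G)$; both choices are valid.
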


\,

We first explain why the above lemma implies exponential tightness. For any $m \in \mathbb{N},\, k \in \mathbb{Z} $ and any $\delta, A > 0$, define
\begin{align*}
B_{k,A}  = \left\{ \sup_{0 \leq t \leq T} \left|  \mu_t (\theta_k ) \right| \leq A \right\}, \quad
B_{k,m,\delta}  = \left\{ \sup_{0 \leq |t -s| \leq \delta} \left|  (\mu_t - \mu_s) (\theta_k)  \right| \leq \frac{1}{m} \right\}.
\end{align*}
Then by Lemma \ref{exptight}, for any $n > 0$, there exist $A = A (n,k)$ and $\delta = \delta (m,k,n)$ such that
\begin{equation*}
\sup_{N \geq 1} Q^N_\rho \left[ B_{k,A}^c \right] < e^{-  (a_N^2/N) n k},\quad
\sup_{N \geq 1} Q^N_\rho \left[ B_{k,m,\delta}^c \right] < e^{-  (a_N^2/N) n k m}.
\end{equation*}
Let
$$
\mathcal{K}_n = \left\{ \bigcap_{k \geq 1}  B_{k,A(n,k)}  \right\} \bigcap
\left\{ \bigcap_{k,\,m \geq 1}  B_{k,m,\delta(m,k,n)} \right\}.
$$
It can be checked  that $\mathcal{K}_n$ is a compact set for each $n \geq 1$. Moreover, $Q^N_\rho \, [\mathcal{K}_n^c]$ is bounded by a multiple of $\exp \{ - (a_N^2 / N) n\}$. This proves the exponential tightness.

\,

\begin{proof}[Proof of Lemma \ref{exptight}]
We first prove \eqref{b1}. Since \eqref{r2} and \eqref{r4} are bounded,  we only need to show that
\begin{equation*}
\limsup _{A \rightarrow \infty}\, \limsup _{N \rightarrow \infty} \,\frac{N}{a_N^2} \log \mathbb{P}^{N}_{\rho} \,\left[ \sup_{0 \leqslant t \leqslant T} \left| \frac{N}{a^2_N} \log M_t^N (G) + \langle \mu^N_0,G\rangle + \int_0^t \langle \mu^N_s, \tilde{\Delta} G\rangle d s  \right| > A\right] = - \infty,
\end{equation*}
which is a consequence of
\begin{align}
\limsup _{A \rightarrow \infty} \,\limsup _{N \rightarrow \infty} \,\frac{N}{a_N^2} \log \mathbb{P}^{N}_{\rho}  \left[  \sup_{0 \leqslant t \leqslant T} \left| \frac{N}{a_N^2} \log M^N_t (G) \right| > A/3 \right] = -\infty,\label{b4}\\
\limsup _{A \rightarrow \infty}\, \limsup _{N \rightarrow \infty} \,\frac{N}{a_N^2} \log \mathbb{P}^{N}_{\rho} \left[ \frac{1}{a_N} \left| \sum_{x \in \mathbb{T}_N} \left( \eta_0 (x) - \rho \right)  G \left( x/N \right) \right| > A/3 \right] = -\infty,\label{b5}
\end{align}
and
\begin{equation}
\limsup _{A \rightarrow \infty} \,\limsup _{N \rightarrow \infty}\, \frac{N}{a_N^2} \log \mathbb{P}^{N}_{\rho}  \left[ \sup_{0 \leqslant t \leqslant T} \left| \int_0^t \langle \mu^N_s, \tilde{\Delta} G\rangle d s \right| > A/3 \right] = -\infty.\label{b6}
\end{equation}
Notice that \eqref{b6} follows from Lemma \ref{lem3}. To prove \eqref{b4}, without loss of generality,  we first remove the modulus since otherwise we can replace $G$ by $- G$.   Then
\begin{align*}
&\mathbb{P}^{N}_{\rho} \left[  \sup _{0\leqslant t \leqslant T}  \frac{N}{ a_N^2} \log M^N_t (G) > A/3 \right]
=  \mathbb{P}^{N}_{\rho} \left[  \sup _{0\leqslant t \leqslant T}  M^N_t (G) >  \exp \left\{ \frac{A a_N^2 }{3 N} \right\} \right]\\
& \leq  4 \exp \left\{ - \frac{A a_N^2 }{3 N} \right\} \mathbb{E}^N_\rho \left[\left( M_T^N (G) \right)^2\right] \leq 4 \exp \left\{ C \left(||G||_\infty^2 + ||G^\prime||_\infty^2\right) T - \frac{A a_N^2 }{3 N} \right\}.
\end{align*}
This proves \eqref{b4}. For \eqref{b5}, removing the modulus inside the probability as before and then by Chebyshev's inequality,
\begin{align*}
&\frac{N}{a_N^2} \log \mathbb{P}^{N}_{\rho} \left[ \frac{1}{a_N}  \sum_{x \in \mathbb{T}_N} \left( \eta_0 (x) - \rho \right)  G \left( x/N \right)  > A/3 \right] \\
& \leq - A/3 + \frac{N}{a_N^2} \log \mathbb{E}^{N}_{\rho} \left[ \exp \left\{\frac{a_N}{N}  \sum_{x \in \mathbb{T}_N} \left( \eta_0 (x) - \rho \right)  G \left( x/N \right)  \right\}  \right] \\
&= - A/3 + \frac{N}{a_N^2} \sum_{x \in \mathbb{T}_N} \log \left(  1 + \frac{C (\rho) a_N^2}{N^2} G(x/N)^2 + \mathcal{O}_G \left( \frac{a_N^3}{N^3} \right)  \right)\\
&\leq - A/3 + \frac{C (\rho)}{N} \sum_{x \in \mathbb{T}_N} G(x/N)^2 + \mathcal{O}_G \left( \frac{a_N}{N} \right).
\end{align*}
This proves \eqref{b5} by letting $N \rightarrow \infty$ and then $A \rightarrow \infty$.

Next we prove \eqref{u0}. Fix $A > 0$, which will converge to infinity after $\delta \rightarrow 0,\, N \rightarrow \infty$.  From Equations \eqref{r0}-\eqref{r5} with $G$ replaced by $A G$,  we only need to prove \eqref{u0} for the following four terms:
$$
\frac{N}{A a_N^2} \log M^N_t (A G), \quad   \int_0^t \langle \mu^N_s, \tilde{\Delta} G\rangle ds,
$$
$$ A \int_0^t \frac{1}{2}  (\eta_s (-1) - \eta_s (0))^2 \left( G \left(\frac{0}{N}\right) - G \left(\frac{-1}{N}\right) \right)^2 ds,
$$
and
$$
A \int_0^t \frac{1}{2 N} \sum_{x \in \mathbb{T}_N,\atop x \neq -1}   (\eta_s (x) - \eta_s (x+1))^2 \left(\nabla_N G \left(\frac{x}{N}\right)\right)^2 d s.
$$
Notice that the proof of \eqref{b4} also applies to the martingale term. The second one  follows from Lemma \ref{lem3}. For the last two terms, notice that they are both bounded by $C (G) \delta A$. The proof is complete.
\end{proof}

\section{Lower bound}\label{section of lower bound}

For  $f,\,g \in \mathscr{G}_0$, we define
\[
\langle f | g\rangle=2\rho(1-\rho)\left[\big(f(0) - f(1)\big)\big(g(0) - g(1)\big)+ \int_0^1 \partial_u f\,(u) \,\partial_u g \,(u)  \,d u\right].
\]
For $f,\,g\in \mathscr{G}$ and $0\leq t\leq T$, we define
\begin{align*}
\langle\langle f, g \rangle\rangle_t = \int_0^t \langle f_s | g_s\rangle \,ds.
\end{align*}
For simplicity, we write $\langle\langle f, g\rangle\rangle_T$ as $\langle\langle f, g\rangle\rangle$. To make $\langle\langle \cdot, \cdot\rangle\rangle$ an inner product, we write $f\simeq g$ if and only if $\langle\langle f-g, f-g \rangle\rangle=0$ and then define $\mathscr{H}$ as the Hilbert space which is the completion of $\mathscr{G}/_{\simeq}$.

For locally square integrable martingales $\{M_t\}_{t\geq 0}$ and $\{N_t\}_{t\geq 0}$, we use $\{\langle M, N\rangle_t\}_{t\geq 0}$ to denote the predictable quadratic-covariation process which is continuous and use $\{[M,N]_t\}_{t\geq 0}$ to denote the optional quadratic-covariation process which satisfies
\[
[M, N]_T= \lim_{\sup(t_{i+1}-t_i)\rightarrow 0}\,\sum_i\big(M_{t_{i+1}}-M_{t_i}\big)\big(N_{t_{i+1}}-N_{t_i}\big) \quad \text{in $L^2$},
\]
where the limit is over all   partitions $\{t_i\}$ of $[0, T]$. Note that $[M, N]=\langle M, N \rangle$ when $M$ and $N$ are continuous. For any $H\in C^{1,2}([0, T]\times \{0,1\}^{\mathbb{T}_N})$, by Dynkin's martingale formula,
\begin{equation}\label{martingale2}
\Lambda_t^N(H) :=H(t, \eta_t)-H(0, \eta_0)-\int_0^t (\mathcal{L}_N+\partial_s)H(s, \eta_s)\,ds
\end{equation}
 is a martingale and for any $H_1, H_2\in C^{1,2}([0, T]\times \{0,1\}^{\mathbb{T}_N})$,
\begin{align}\label{equ 5.0}
\langle \Lambda^N(H_1), \Lambda^N(H_2) \rangle_t =\int_0^t\mathcal{L}_N\big(H_1H_2\big)-H_1\mathcal{L}_NH_2
-H_2\mathcal{L}_NH_1 \,ds.
\end{align}

The following lemma  gives  clear expressions of $I_{dyn}$ and $I_{ini}$.

\,

\begin{lemma}\label{lemma 5.3}
(i) If $I_{dyn}\, (\mu)<+\infty$, then there exists $\psi\in \mathscr{H}$ such that $\ell_T(\mu, G)=\langle\langle G,\psi \rangle\rangle$ for any $G\in \mathscr{G}$ and $I_{dyn}\, (\mu)=\frac{1}{2}\langle\langle \psi,\psi \rangle\rangle$.\\
(ii) If $I_{ini}\,(\nu)<+\infty$ for  $\nu \in \mathscr{M}$, then there exists $\phi\in L^2[0,1]$ such that $\nu(G)=\langle \phi,G\rangle$ for any $G \in \mathscr{G}_0$ and
\[
I_{ini}\,(\nu)=\frac{\int_0^1\phi^2(u)\,du}{2\rho(1-\rho)}.
\]

\end{lemma}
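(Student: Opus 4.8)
The plan is to recognize both parts as the same Legendre-duality computation in a Hilbert space, resolved by the Riesz representation theorem. The starting point is the observation that the quadratic penalties in $I_{dyn}$ and $I_{ini}$ are exactly squared Hilbert-space norms. Directly from the definition of $\langle\langle\cdot,\cdot\rangle\rangle$,
\[
\tfrac12\langle\langle G,G\rangle\rangle=\rho(1-\rho)\int_0^T\big(G_t(0)-G_t(1)\big)^2dt+\rho(1-\rho)\int_0^T\!\!\int_0^1\big(\partial_uG_t(u)\big)^2du\,dt,
\]
so that $I_{dyn}(\mu)=\sup_{G\in\mathscr{G}}\big\{\ell_T(\mu,G)-\tfrac12\langle\langle G,G\rangle\rangle\big\}$, and likewise $I_{ini}(\nu)=\sup_{\gamma\in\mathscr{G}_0}\big\{\nu(\gamma)-\tfrac{\rho(1-\rho)}{2}\|\gamma\|_{L^2}^2\big\}$. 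In each case the functional being penalized, $G\mapsto\ell_T(\mu,G)$ and $\gamma\mapsto\nu(\gamma)$, is linear in its test-function argument.

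For part (i), I would first deduce from $C:=I_{dyn}(\mu)<\infty$ that $\ell_T(\mu,\cdot)$ is bounded in the seminorm $\langle\langle\cdot,\cdot\rangle\rangle^{1/2}$. Replacing $G$ by $\lambda G$ and using linearity gives $\lambda\,\ell_T(\mu,G)-\tfrac{\lambda^2}{2}\langle\langle G,G\rangle\rangle\le C$ for every $\lambda\in\mathbb{R}$; optimizing over $\lambda$ yields $\ell_T(\mu,G)^2\le 2C\,\langle\langle G,G\rangle\rangle$. In particular $\langle\langle G,G\rangle\rangle=0$ forces $\ell_T(\mu,G)=0$, so $\ell_T(\mu,\cdot)$ descends to a bounded linear functional on $\mathscr{G}/_{\simeq}$ and, by the density of $\mathscr{G}/_{\simeq}$ in its completion, extends uniquely to $\mathscr{H}$. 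The Riesz representation theorem then supplies $\psi\in\mathscr{H}$ with $\ell_T(\mu,G)=\langle\langle G,\psi\rangle\rangle$ for all $G\in\mathscr{G}$. To finish, I would complete the square via the identity $\ell_T(\mu,G)-\tfrac12\langle\langle G,G\rangle\rangle=\tfrac12\langle\langle\psi,\psi\rangle\rangle-\tfrac12\langle\langle G-\psi,G-\psi\rangle\rangle$ and take the supremum over the dense set $\mathscr{G}/_{\simeq}$, approximating $\psi$ to conclude $I_{dyn}(\mu)=\tfrac12\langle\langle\psi,\psi\rangle\rangle$.

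Part (ii) runs identically inside $L^2[0,1]$. The same scaling argument gives $\nu(\gamma)^2\le 2\rho(1-\rho)\,I_{ini}(\nu)\,\|\gamma\|_{L^2}^2$, so $\nu(\cdot)$ is bounded on $\mathscr{G}_0$ for the $L^2$-norm; since $\mathscr{G}_0$ is dense in $L^2[0,1]$ by Lemma \ref{lemma 1.1.1 topology}, it extends to a bounded functional on $L^2[0,1]$, and Riesz produces $\phi\in L^2[0,1]$ with $\nu(\gamma)=\langle\phi,\gamma\rangle$. Completing the square against the scaled norm $\tfrac{\rho(1-\rho)}{2}\|\cdot\|^2$ places the maximizer at $\gamma=\phi/(\rho(1-\rho))$ and yields $I_{ini}(\nu)=\int_0^1\phi^2(u)\,du\,/\,\big(2\rho(1-\rho)\big)$.

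The computation is essentially routine once the Hilbert-space structure is isolated; the step I expect to demand the most care is the boundedness-and-extension argument, namely verifying that $\ell_T(\mu,\cdot)$ genuinely descends to the quotient $\mathscr{G}/_{\simeq}$ (i.e. annihilates the $\simeq$-null directions) so that Riesz may be applied in the completed space $\mathscr{H}$ rather than only on the dense domain $\mathscr{G}$. The topological setup of the paper, in particular working with $\mathscr{M}$ and the quotient Hilbert space $\mathscr{H}$, is precisely what makes this extension legitimate.
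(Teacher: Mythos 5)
Your proposal is correct and follows essentially the same route as the paper's proof: the scaling argument to show $\ell_T(\mu,\cdot)$ (resp.\ $\nu(\cdot)$) annihilates null directions and is bounded for the relevant Hilbert norm, then Riesz representation on $\mathscr{H}$ (resp.\ $L^2[0,1]$), and finally the quadratic optimization to identify the supremum as $\tfrac12\langle\langle\psi,\psi\rangle\rangle$ (the paper phrases this last step via Cauchy--Schwarz rather than completing the square, but the computation is the same).
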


\proof

The proofs of the two parts follow the same strategy, hence we only give the proof of $(i)$. According to the definition of $I_{dyn}$,
\[
I_{dyn} (\mu)=\sup_{G\in \mathscr{G}} \left\{\ell_{T}(\mu, G)-(1/2)\langle\langle G,G \rangle\rangle \right\}.
\]
If $\ell_{T}(\mu, G)\neq 0$ for some $G$ such that $\langle\langle G,G \rangle\rangle=0$, then
\[
I_{dyn}\,(\mu)\geq \sup_{c\in \mathbb{R}} \left\{ \ell_{T}(\mu, cG)-\frac{1}{2} \langle\langle cG, cG \rangle\rangle \right\}=\sup_{c\in \mathbb{R}} \left\{ c \, \ell_{T}(\mu, G) \right\}=+\infty,
\]
which is contradictory. Therefore, $\ell_{T}(\mu, \cdot)$ is well defined on $\mathscr{G}/_{\simeq}$. For $G\in \mathscr{G}/_{\simeq}$ such that $G\neq 0$,
$\ell_{T}(\mu, cG)-(1/2)\langle\langle cG, cG\rangle\rangle$ obtains maximum $\frac{\ell^2_T(\mu,G)}{2\langle\langle G,G\rangle\rangle}$ at $c=\frac{\ell_T(\mu, G)}{\langle\langle G,G\rangle\rangle}$. Therefore,
\[
I_{dyn} \,(\mu)=\sup_{G\in \mathscr{G}/_{\simeq}, G\neq 0}\, \frac{\ell^2_T(\mu,G)}{2\langle\langle G,G \rangle\rangle}.
\]
Since $I_{dyn} (\mu)<+\infty$, $\ell_T(\mu, \cdot)$ can be extended to a bounded linear function on $\mathscr{H}$. As a result, the existence of $\psi$ follows from Riesz's representation theorem and $I_{dyn} \,(\mu)=\frac{1}{2}\langle\langle\psi, \psi\rangle\rangle$ follows from Cauchy-Schwartz's inequality.
\qed

\,

For $\phi\in \mathscr{G}_0$ and sufficiently large $N$,  denote by $\nu_{_{N,\phi}}$ the product measure on $\{0,1\}^{\mathbb{T}_N}$ with marginals given by
\[
\nu_{_{N,\phi}}\,\{\eta: \eta(x)= 1\}=\rho+\frac{a_N}{N}\,\phi \left(\frac{x}{N} \right),\quad x \in \mathbb{T}_N,
\]
and by $\mathbb{P}^N_\phi$ the law of the process $\{\eta_t\}_{t\geq 0}$ with initial distribution $\nu_{_{N,\phi}}$.  For any $G\in \mathscr{G}$,  denote by $\widehat{\mathbb{P}}^N_{\phi, G}$ the probability measure on $D \left([0,T],\{0,1\}^{\mathbb{T}_N}\right)$ such that
\[
\frac{d\, \widehat{\mathbb{P}}^N_{\phi, G}}{d\,\mathbb{P}^N_\phi}=M_T^N(G).
\]

\,

\begin{lemma}\label{lemma 5.4}
For any $G\in \mathscr{G}$ and any $\phi\in \mathscr{G}_0$, $\{\mu^N_t\}_{0\leq t\leq T}$ converges in $\widehat{\mathbb{P}}^N_{\phi, G}$-probability to $\mu^G=\{\mu_t^G\}_{0\leq t\leq T}$ as $N \rightarrow \infty$, where $\mu^{G}$ is the unique element in $D\big([0,T], \mathscr{M}\big)$ such that
\begin{equation}\label{equ ODE for linear operator}
\begin{cases}
&\frac{d}{dt}\mu_t^G(h)=\mu_t^G(\tilde{\Delta}h)+\langle h|G_t\rangle, \\
&\mu_0^G(h)=\int_0^1\phi(u)h(u)\, du
\end{cases}
\end{equation}
for any $h\in \mathscr{G}_0$ and $0\leq t\leq T$.
\end{lemma}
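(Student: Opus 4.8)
The plan is to make the tilted dynamics explicit, to derive for each fixed test function $h\in\mathscr{G}_0$ a closed martingale decomposition of the coordinate $\langle\mu^N_t,h\rangle$ under $\widehat{\mathbb{P}}^N_{\phi,G}$, to show that its drift converges to the right-hand side of \eqref{equ ODE for linear operator} while its martingale part vanishes, and finally to exploit the eigenfunction structure of the basis $\{\theta_k\}$ to obtain uniqueness for the limiting system and hence convergence in probability to the deterministic $\mu^G$. Since $d\widehat{\mathbb{P}}^N_{\phi,G}/d\mathbb{P}^N_\phi = M_T^N(G)$ is the Feynman--Kac martingale \eqref{martingale1} built from $f(s,\eta)=\exp\{(a_N/N)\sum_x(\eta(x)-\rho)G_s(x/N)\}$, under $\widehat{\mathbb{P}}^N_{\phi,G}$ the configuration is a time-inhomogeneous Markov chain with Doob-transformed generator $\mathcal{L}^{G,s}_N u = f(s,\cdot)^{-1}[\mathcal{L}_N(f(s,\cdot)u)-u\,\mathcal{L}_N f(s,\cdot)]$, which a direct computation rewrites as
\[
\mathcal{L}^{G,s}_N u(\eta)=\sum_{x\in\mathbb{T}_N} r_x\,\frac{f(s,\eta^{x,x+1})}{f(s,\eta)}\bigl[u(\eta^{x,x+1})-u(\eta)\bigr],\qquad r_{-1}=N,\ r_x=N^2\ (x\neq-1).
\]
Taking the time-independent $H(\eta)=\langle\mu^N,h\rangle$, Dynkin's formula shows that $\widetilde{\Lambda}^N_t(h):=\langle\mu^N_t,h\rangle-\langle\mu^N_0,h\rangle-\int_0^t\mathcal{L}^{G,s}_N H(\eta_s)\,ds$ is a $\widehat{\mathbb{P}}^N_{\phi,G}$-martingale.

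Next I would compute the drift by expanding $f(s,\eta^{x,x+1})/f(s,\eta)=1+(a_N/N)(\eta(x)-\eta(x+1))(G_s((x+1)/N)-G_s(x/N))+\mathcal{O}_G(a_N^2/N^4)$ and using $H(\eta^{x,x+1})-H(\eta)=a_N^{-1}(\eta(x)-\eta(x+1))(h((x+1)/N)-h(x/N))$. The zeroth-order term reproduces $a_N^{-1}\mathcal{L}_N(\sum_y\eta(y)h(y/N))$, which after summation by parts converges to $\langle\mu^N_s,\tilde{\Delta}h\rangle$, the slow-bond boundary corrections cancelling exactly as the sum of \eqref{r1} and \eqref{r3} cancels via the boundary relation \eqref{bc}. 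The first-order term equals, in the limit,
\[
\frac{1}{N}\sum_{x\neq-1}\bigl(\eta_s(x)-\eta_s(x+1)\bigr)^2\nabla_N G_s\!\left(\tfrac{x}{N}\right)\nabla_N h\!\left(\tfrac{x}{N}\right)+\bigl(\eta_s(-1)-\eta_s(0)\bigr)^2\bigl(G_s(0)-G_s(1)\bigr)\bigl(h(0)-h(1)\bigr).
\]
Replacing $(\eta_s(x)-\eta_s(x+1))^2$ by its mean $2\rho(1-\rho)$ on the bulk bonds via Lemma \ref{lem2}, and $(\eta_s(-1)-\eta_s(0))^2=\eta_s(-1)(1-\eta_s(0))+\eta_s(0)(1-\eta_s(-1))$ by $2\rho(1-\rho)$ on the slow bond via Lemma \ref{lem1}, this converges to $\langle h\,|\,G_s\rangle$. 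The replacement lemmas are stated under $\mathbb{P}^N_\rho$ at the scale $a_N$; since the cost of passing to $\widehat{\mathbb{P}}^N_{\phi,G}$ is a tilt of order $\exp\{\mathcal{O}(a_N^2/N)\}$ together with an initial relative entropy $\mathrm{H}(\nu_{N,\phi}\,|\,\nu_\rho)=\mathcal{O}(a_N^2/N)=o(a_N)$, a Cauchy--Schwarz and entropy-inequality argument transfers them to $\widehat{\mathbb{P}}^N_{\phi,G}$, so the drift converges in $\widehat{\mathbb{P}}^N_{\phi,G}$-probability to $\int_0^t[\langle\mu^N_s,\tilde{\Delta}h\rangle+\langle h\,|\,G_s\rangle]\,ds$.

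I would then control the martingale and the initial data. The predictable quadratic variation of $\widetilde{\Lambda}^N(h)$ is $\int_0^t\sum_x r_x(f(s,\eta^{x,x+1})/f(s,\eta))[H(\eta^{x,x+1})-H(\eta)]^2\,ds=\mathcal{O}(N/a_N^2)$, which tends to $0$ because $a_N\gg\sqrt N$; hence $\widetilde{\Lambda}^N(h)\to0$ in $L^2(\widehat{\mathbb{P}}^N_{\phi,G})$. The law of $\eta_0$ under $\widehat{\mathbb{P}}^N_{\phi,G}$ is $\nu_{N,\phi}$ (as $M_0^N(G)=1$), and a weak law of large numbers for this product measure gives $\langle\mu^N_0,h\rangle\to\int_0^1\phi(u)h(u)\,du$ in probability. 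The uniform drift bound and vanishing quadratic variation yield, through Aldous' criterion applied coordinatewise in $\mathscr{M}$, tightness of $\{\mu^N\}$ in $D([0,T],\mathscr{M})$, with every limit point concentrated on trajectories solving \eqref{equ ODE for linear operator}.

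Finally, uniqueness is where the constructed topology pays off: testing \eqref{equ ODE for linear operator} against the basis $\{\theta_k\}$, which by Lemma \ref{lemma 1.1.1 topology} is an orthogonal basis of $L^2[0,1]$ and consists of eigenfunctions of $\tilde{\Delta}$ (namely $\tilde{\Delta}\theta_k=-\lambda_k\theta_k$), decouples the equation into the scalar linear ODEs $\tfrac{d}{dt}\mu^G_t(\theta_k)=-\lambda_k\mu^G_t(\theta_k)+\langle\theta_k\,|\,G_t\rangle$ with the prescribed initial values, each having a unique explicit solution. As the topology of $\mathscr{M}$ is determined by the coordinates $\mu_t(\theta_k)$, this fixes $\mu^G$ uniquely, which is precisely the ODE reduction announced in Remark \ref{remark 1.2}; combined with tightness, the limit is deterministic and the convergence in probability follows. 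The main obstacle I expect is Step 2, namely carrying out the drift expansion so that the bulk term and the isolated slow-bond term are handled separately and the summation-by-parts boundary terms cancel through \eqref{bc}, and then justifying the two super-exponential replacements under the tilted measure $\widehat{\mathbb{P}}^N_{\phi,G}$ rather than under $\mathbb{P}^N_\rho$.
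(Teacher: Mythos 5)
Your proposal is correct and follows essentially the same route as the paper: the Doob-transformed generator you write down is exactly the content of the generalized Girsanov theorem the paper invokes (the first-order term in your rate expansion is the bracket $\langle\Lambda^N(F_h),\widetilde{\Lambda}^N(f_G)\rangle$), and you identify the same drift decomposition, the same replacements via Lemmas \ref{lem1} and \ref{lem2} transferred to the tilted measure by Cauchy--Schwarz, the same vanishing of the martingale part, and the same eigenfunction/ODE uniqueness. The only cosmetic differences are that the paper closes the argument with Gr\"onwall's inequality coordinatewise rather than Aldous tightness plus limit-point identification, and that the deterministic bound on the tilt is $e^{\mathcal{O}(a_N)}$ rather than $e^{\mathcal{O}(a_N^2/N)}$ (which still suffices since the replacement estimates are super-exponential at scale $a_N$).
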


\begin{remark}
Intuitively but not rigorously,  integrating by parts, $\mu^G_t$ given by  \eqref{equ ODE for linear operator} should be a signed measure such that $\mu^G_t(du)=\rho(t,u)\,du$, where $\rho(t,u)$ is the solution to the PDE
\[
\begin{cases}
&\partial_t\rho(t,u)=\tilde{\Delta}\rho(t,u)-2\rho(1-\rho)\tilde{\Delta}G_t(u),\\
&\rho(0,u)=\phi(u), ~0\leq u\leq 1, \\
&\rho(t, \cdot)\in \mathscr{G}_0, ~0\leq t\leq T.
\end{cases}
\]
However, as we have discussed in Remark \ref{remark 1.2}, we do not manage to prove the uniqueness or existence to this PDE. That's why we only consider $\mu^G$ as the solution to an equation on  $\mathscr{M}$ the space of  linear functionals on $\mathscr{G}_0$, the uniqueness and existence of which we can show rigorously.
\end{remark}

\,

To prove Lemma \ref{lemma 5.4}, we need some preparation. For  $h\in \mathscr{G}$, we write
$$
F_h(t, \eta)=\frac{1}{a_N}\sum_{x \in \mathbb{T}_N}\,(\eta(x)-\rho)\,h_t\left(\frac{x}{N}\right)
$$
and hence
\[
\Lambda^N_t(F_h)=\langle\mu^N_t, h_t\rangle-\langle\mu^N_0, h_0\rangle-\int_0^t (\mathcal{L}_N+\partial_s)\langle\mu_s^N, h_s\rangle \,ds.
\]

\,

\begin{lemma}\label{lemma 5.5}
For any $\phi\in \mathscr{G}_0$, $G, h\in \mathscr{G}$,
\begin{equation}\label{equ 5.3}
[\Lambda^N(F_h), \Lambda^N(F_h)]_T=o_{exp}(a_N^2)
\end{equation}
under both $\mathbb{P}^N_\phi$ and $\widehat{\mathbb{P}}^N_{\phi, G}$.
\end{lemma}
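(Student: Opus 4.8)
The plan is to use that $[\Lambda^N(F_h),\Lambda^N(F_h)]_T$, being the optional quadratic variation of a pure–jump martingale, equals the sum of the squares of its jumps, and then to bound this sum pathwise by the number of rings of the underlying Poisson clocks. Since the integral term in \eqref{martingale2} is absolutely continuous, the jumps of $\Lambda^N(F_h)$ coincide with those of $F_h(t,\eta_t)=\langle\mu^N_t,h_t\rangle$, and a ring of the clock on bond $(x,x+1)$ produces
\[
\Delta\Lambda^N_s(F_h)=\frac{1}{a_N}\big(\eta_s(x+1)-\eta_s(x)\big)\Big(h_s\big(\tfrac{x}{N}\big)-h_s\big(\tfrac{x+1}{N}\big)\Big),
\]
so that $[\Lambda^N(F_h),\Lambda^N(F_h)]_T=\sum_{s\le T}(\Delta\Lambda^N_s(F_h))^2$. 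Because $h\in\mathscr{G}$ satisfies $\|h_s\|_\infty+\|\partial_u h_s\|_\infty\le C(h)$ uniformly in $s$, a fast–bond ring contributes at most $C(h)/(a_N^2N^2)$ (using $|h_s(x/N)-h_s((x+1)/N)|\le C(h)/N$), whereas a ring on the slow bond $(-1,0)$ contributes only at most $C(h)/a_N^2$, since there $h_s(-1/N)-h_s(0/N)\approx h_s(1)-h_s(0)$ is of order one. Writing $\mathcal{N}^{\mathrm f}_T$ and $\mathcal{N}^{\mathrm s}_T$ for the numbers of fast– and slow–bond rings up to $T$, which are Poisson variables of parameters of order $N^3T$ and $NT$ independent of the initial configuration, I obtain the pathwise estimate
\[
[\Lambda^N(F_h),\Lambda^N(F_h)]_T\le \frac{C(h)}{a_N^2N^2}\,\mathcal{N}^{\mathrm f}_T+\frac{C(h)}{a_N^2}\,\mathcal{N}^{\mathrm s}_T.
\]

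Under $\mathbb{P}^N_\phi$ the claim \eqref{equ 5.3} then reduces, by a union bound, to the two Poisson upper tails $\mathbb{P}^N_\phi(\mathcal{N}^{\mathrm f}_T>\epsilon a_N^2N^2/(2C))$ and $\mathbb{P}^N_\phi(\mathcal{N}^{\mathrm s}_T>\epsilon a_N^2/(2C))$. Both thresholds lie far above the corresponding means precisely because $a_N\gg\sqrt N$ (hence $a_N^2\gg N$ and $a_N^2N^2\gg N^3$), so I would apply the exponential Chebyshev bound $\mathbb{P}(\mathcal N>x)\le\exp\{-\theta x+\lambda(e^\theta-1)\}$ with $\lambda$ the Poisson parameter. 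A fixed $\theta$ already settles the fast term, giving $a_N^{-2}\log\mathbb{P}^N_\phi(\cdots)\le -\epsilon N^2/C+o(N^2)\to-\infty$; for the slow term, whose mean is only of order $N$, a growing tilt $\theta=\theta_N=\tfrac12\log(a_N^2/N)\to\infty$ works, because then $\lambda e^{\theta_N}/a_N^2=O(\sqrt N/a_N)\to0$ while $\theta_N\epsilon/(2C)\to\infty$, so $a_N^{-2}\log\mathbb{P}^N_\phi(\cdots)\le -\theta_N\epsilon/(2C)+o(1)\to-\infty$. This proves \eqref{equ 5.3} under $\mathbb{P}^N_\phi$.

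For the transformed measure I would return to $\mathbb{P}^N_\phi$ through the density $M_T^N(G)$ and Cauchy--Schwarz:
\[
\widehat{\mathbb{P}}^N_{\phi,G}\big([\Lambda^N(F_h),\Lambda^N(F_h)]_T>\epsilon\big)\le\big(\mathbb{E}^N_\phi[(M_T^N(G))^2]\big)^{1/2}\big(\mathbb{P}^N_\phi([\Lambda^N(F_h),\Lambda^N(F_h)]_T>\epsilon)\big)^{1/2}.
\]
The second factor was just shown to decay faster than $\exp\{-cN^2a_N^2\}$, so it remains to verify that $a_N^{-2}\log\mathbb{E}^N_\phi[(M_T^N(G))^2]$ stays bounded. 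For this I would use that $\log M_T^N(G)=\tfrac{a_N^2}{N}\{L(G)-Q(G)+\mathcal{O}_G(a_N^{-1})\}$, where $L(G)=\ell_T(\mu^N,G)$ is linear in $G$ and $Q(G)$ is the quadratic compensator given by \eqref{r2} and \eqref{r4}; since $Q(2G)=4Q(G)$, this yields the algebraic identity
\[
(M_T^N(G))^2=M_T^N(2G)\,\exp\Big\{\tfrac{a_N^2}{N}\big(2Q(G)+\mathcal{O}_G(a_N^{-1})\big)\Big\}.
\]
As \eqref{r2} and \eqref{r4} are bounded deterministically by some $C(G,T)$ and $M_T^N(2G)$ is a mean-one martingale under $\mathbb{P}^N_\phi$, I get $\mathbb{E}^N_\phi[(M_T^N(G))^2]\le C\exp\{C(G,T)a_N^2/N\}$, so that $a_N^{-2}\log\mathbb{E}^N_\phi[(M_T^N(G))^2]=O(1/N)\to0$. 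Multiplying the two factors gives \eqref{equ 5.3} under $\widehat{\mathbb{P}}^N_{\phi,G}$ as well.

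The main obstacle, and the only place where the precise hypothesis $\sqrt N\ll a_N\ll N$ is used, is the fast–bond contribution: the clocks ring of order $N^3$ times, so without the gain $N^{-2}$ coming from $|h_s(x/N)-h_s((x+1)/N)|=O(1/N)$ the threshold would drop below the Poisson mean and the estimate would be false; it is exactly $a_N\gg\sqrt N$ that lifts the threshold $\epsilon a_N^2N^2$ above the mean of order $N^3$. Secondary care is needed for the slow bond, whose jumps are only $O(1/a_N^2)$ and hence force the unbounded tilt $\theta_N\to\infty$, and for the second–moment control of the exponential martingale used to transfer the estimate to $\widehat{\mathbb{P}}^N_{\phi,G}$.
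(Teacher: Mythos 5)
Your proof is correct and follows essentially the same route as the paper's: bound the optional quadratic variation by the sum of squared jumps, control the resulting weighted Poisson counts by exponential Chebyshev (this is where $a_N\gg\sqrt N$ enters), and transfer the estimate to $\widehat{\mathbb{P}}^N_{\phi,G}$ by Cauchy--Schwarz. The only cosmetic differences are that you aggregate the fast clocks into a single Poisson variable and use a union bound where the paper computes the exponential moment of the weighted sum of the individual $Y_i(T)$ directly, and that you control $\mathbb{E}^N_\phi[(M_T^N(G))^2]$ via the mean-one martingale $M_T^N(2G)$ where the paper simply invokes the deterministic bound $M_T^N(G)\le e^{C a_N}$.
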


\proof

We first show that Equation \eqref{equ 5.3} holds under $\mathbb{P}^N_\phi$. According to the definition of $\Lambda^N_t(F_h)$,
\[
[\Lambda^N(F_h), \Lambda^N(F_h)]_T=\sum_{0\leq s\leq T}\big[\langle \mu_s^N, h_s\rangle-\langle\mu_{s-}^N, h_{s-}\rangle\big]^2.
\]
Recall that $\{Y_i (\cdot)\}_{i \in \mathbb{T}_N}$ are independent Poisson processes.  If $s$ is an event moment of $Y_i(\cdot)$, then
\[
\langle\mu_s^N, h_s\rangle-\langle\mu_{s-}^N, h_{s-}\rangle=\frac{1}{a_N}\left(h_{s-}\left(\frac{i+1}{N}\right)-h_{s-}\left(\frac{i}{N}\right)\right)\big(\eta_{s-}(i)-\eta_{s-}(i+1)\big).
\]
Consequently, let $C_h=\sup\limits_{0\leq t\leq T, \atop 0\leq u \leq 1}|h(t,u)|$ and $D_h=\sup\limits_{0\leq t\leq T, \atop 0\leq u\leq 1}|\partial_u h\,(t,u)|$, then
\[
[\Lambda^N(F_h), \Lambda^N(F_h)]_T\leq \frac{4}{a_N^2}\sum_{i\neq -1}\frac{Y_i(T)}{N^2}\,D_h^2+\frac{16}{a_N^2}\,C_h^2\,Y_{-1}(T)
\]
according to Lagrange's mean value theorem and the fact that there is at most one particle per site. By Chebyshev's inequality, for any $\theta>0$,
\begin{align*}
& \mathbb{P}^N_\phi\left([\Lambda^N(F_h), \Lambda^N(F_h)]_T\geq \epsilon\right) \leq \exp \left\{-\theta a_N^2\epsilon\right\} \,E\,\left[ \exp \left\{4\theta\sum_{i\neq -1}\frac{Y_i(T)}{N^2}D_h^2
+16\, \theta \,C_h^2\,Y_{-1}(T) \right\} \right] \\
&=\exp \left\{-\theta a_N^2\epsilon\right\} \left[\exp \left\{ N^2T \left(\exp \left\{\frac{4\theta D_h^2}{N^2}\right\}-1 \right) \right\}\right]^{N-1} \exp \left\{NT \left( \exp \{16\theta C_h^2\}-1 \right) \right\}.
\end{align*}
 Then
\[
\limsup_{N\rightarrow+\infty}\frac{1}{a_N^2}\log \mathbb{P}^N_\phi\big([\Lambda^N(F_h), \Lambda^N(F_h)]_T\geq \epsilon\big)\leq -\theta \epsilon.
\]
 This proves Equation \eqref{equ 5.3}  under $\mathbb{P}^N_\phi$ since $\theta$ is arbitrary.

Now we only need to show that Equation \eqref{equ 5.3} holds under $\widehat{\mathbb{P}}^N_{\phi, G}$. According to the definition of $\widehat{\mathbb{P}}^N_{\phi, G}$ and Cauchy-Schwartz inequality, for any $\epsilon>0$,
\[
\widehat{\mathbb{P}}^N_{\phi, G}\big([\Lambda^N(F_h), \Lambda^N(F_h)]_T\geq \epsilon\big)
\leq \sqrt{\mathbb{P}^N_\phi\big([\Lambda^N(F_h), \Lambda^N(F_h)]_T\geq \epsilon\big)}\sqrt{\mathbb{E}_\phi^N\left[\left(M_T^N(G)\right)^2\right]}.
\]
Recall the expressions of $M_t^N (G)$ given in \eqref{r0}-\eqref{r5}. It is not difficult to check that there exists a finite constant $C$ independent of $N$ such that $M_T^N(G)\leq e^{C\,a_N}$ for sufficiently large $N$. Therefore, Equation \eqref{equ 5.3} also holds under $\widehat{\mathbb{P}}^N_{\phi, G}$.
\qed

\,

\proof[Proof of Lemma \ref{lemma 5.4}]

The existence and uniqueness of Equation \eqref{equ ODE for linear operator} are given in the appendix.  It remains to show that $\mu^N$ converges weakly under  $\widehat{\mathbb{P}}^N_{\phi, G}$ to this unique solution $\mu^G$ as $N \rightarrow \infty$. To achieve this purpose, we need to investigate the martingale $\{M_t^N(G)\}_{t\geq 0}$  in \eqref{martingale1} and to utilize a generalized version of Girsanov's theorem introduced in \cite{Schuppen1974} by Schuppen and Wong.

Recall the definition of $\Lambda_t^N(f)$  in \eqref{martingale2} and that for any $G \in \mathscr{G}$,
$$
 f_G (t,\eta) = \exp \left\{\frac{a_N}{N}\sum_{i \in \mathbb{T}_N} (\eta(i)-\rho)G_t\left(\frac{i}{N}\right)\right\}.
$$
According to  Ito's formula,
\[
dM_t^N(G)=f_G (0, \eta_0)^{-1} \exp \left\{-\int_0^t\frac{(\partial_s+\mathcal{L}_N)f_G}{f_G} (s,\eta_s) \,ds\right\} d\Lambda_t^N(f_G).
\]
For any $t\geq 0$, let
\[
\widetilde{\Lambda}_t^N(f_G)=\int_0^t \frac{1}{f_G (s-, \eta_{s-})}\,d\Lambda_s^N(f_G),
\]
then
\begin{equation}\label{equ 5.5}
dM_t^N(G)=M_{t-}^N(G)\,d\widetilde{\Lambda}_t^N(f_G).
\end{equation}

\,

For any local martingale $\{M_t\}_{t\geq 0}$ under $\mathbb{P}_\phi^N$, let
$$
\widehat{M}_t=M_t- \left\langle M, \widetilde{\Lambda}^N(f_G) \right\rangle_t.
$$
By Equation \eqref{equ 5.5} and the generalized version of Girsanov's theorem \cite{Schuppen1974}, $\{\widehat{M}_t\}_{t\geq 0}$ is a local martingale under $\widehat{\mathbb{P}}^N_{\phi, G}$ and $[\widehat{M}, \widehat{M}] =[M, M]$ under both $\mathbb{P}_\phi^N$ and $\widehat{\mathbb{P}}^N_{\phi, G}$.  Therefore, for any $h\in \mathscr{G}$,
\begin{align*}
\langle \mu_t^N, h_t\rangle=&\langle \mu_0^N, h_0\rangle+\int_0^t (\partial_s+\mathcal{L}_N)\langle\mu_s^N, h_s\rangle ds+\widehat{\Lambda_t^N(F_h)}+\langle\Lambda^N(F_h), \widetilde{\Lambda}^N(f_G)\rangle_t,
\end{align*}
where $\left\{\widehat{\Lambda_t^N(F_h)}\right\}_{t\geq 0}$ is a local martingale under $\widehat{\mathbb{P}}^N_{\phi, G}$ with
\[
\left[\widehat{\Lambda^N(F_h)}, \widehat{\Lambda^N(F_h)}\right]=\left[\Lambda^N(F_h), \Lambda^N(F_h)\right].
\]
Then, by Lemma \ref{lemma 5.5} and Doob's inequality, $\widehat{\Lambda_t^N(F_h)}=o_p(1)$ under $\widehat{\mathbb{P}}^N_{\phi, G}$ and hence
\[
\langle \mu_t^N, h_t\rangle=\langle \mu_0^N, h_0\rangle+\int_0^t (\partial_s+\mathcal{L}_N)\langle\mu_s^N, h_s\rangle ds+o_p(1)+\langle \Lambda^N(F_h), \widetilde{\Lambda}^N(f_G) \rangle_t
\]
under $\widehat{\mathbb{P}}^N_{\phi, G}$.

\,

Next we calculate $\langle \Lambda^N(F_h), \widetilde{\Lambda}^N(f_G) \rangle_t$. According to the definition of $\widetilde{\Lambda}^N_t(f_G)$ and  \eqref{equ 5.0},
\[
d \left\langle \Lambda^N(F_h), \widetilde{\Lambda}^N(f_G) \right\rangle_t=\frac{1}{f_G(t, \eta_t)}d \left\langle \Lambda^N(F_h), \Lambda^N(f_G) \right\rangle_t,
\]
where
\begin{align*}
d \langle \Lambda^N(F_h), \Lambda^N(f_G) \rangle _t=\big(\mathcal{L}_N\big(F_hf_G\big)-f_G\mathcal{L}_NF_h-F_h\mathcal{L}_Nf_G\big)\,dt.
\end{align*}
By direct calculations,
\begin{align*}
\frac{1}{f_G}\Big(\mathcal{L}_N\big(F_hf_G\big)-f_G\mathcal{L}_NF_h-F_h\mathcal{L}_Nf_G\Big)={\rm \uppercase\expandafter{\romannumeral1}}_N
+{\rm \uppercase\expandafter{\romannumeral2}}_N,
\end{align*}
where
\[
\begin{aligned}
&{\rm \uppercase\expandafter{\romannumeral1}}_N
=\frac{N^2}{a_N}\sum_{i\neq-1}\big(\eta_t(i+1)-\eta_t(i)\big)\left(h_t\left(\frac{i}{N}\right)-h_t\left(\frac{i+1}{N}\right)\right) \\
& \quad \quad \quad \times \left(\exp \left\{\frac{a_N}{N}\big(\eta_t(i+1)-\eta_t(i)\big)\left(G_t\left(\frac{i}{N}\right)-G_t\left(\frac{i+1}{N}\right)\right)\right\}-1\right)
\end{aligned}
\]
and
\[
\begin{aligned}
&{\rm \uppercase\expandafter{\romannumeral2}}_N
=\frac{N}{a_N}\big(\eta_t(0)-\eta_t (-1)\big) \left(h_t\left(\frac{-1}{N}\right)-h_t\left(\frac{0}{N}\right)\right) \\
& \quad \quad \quad \times \left(\exp \left\{\frac{a_N}{N}\big(\eta_t(0)-\eta_t(-1)\big)\left(G_t\left(\frac{-1}{N}\right)-G_t\left(\frac{0}{N}\right)\right)\right\}-1\right).
\end{aligned}
\]
By Taylor's expansion formula up to second order,
\[
{\rm \uppercase\expandafter{\romannumeral1}}_N=\frac{1}{N}\sum_{i\neq -1}\big(\eta_t(i)-\eta_t(i+1)\big)^2\,\partial_uh_t\left(\frac{i}{N}\right)\partial_uG_t\left(\frac{i}{N}\right)+o(1)
\]
and
\[
{\rm \uppercase\expandafter{\romannumeral2}}_N=\big(\eta_t(-1)-\eta_t(0)\big)^2\big(h_t(0)-h_t(1)\big)\big(G_t(0)-G_t(1)\big)+o(1).
\]
Since
$
\big(\eta_t(i)-\eta_t(i+1)\big)^2=\eta_t(i)(1-\eta_t(i+1))+\eta_t(i+1)(1-\eta_t(i))
$,
Lemmas \ref{lem1} and \ref{lem2} control the errors when we replace $\big(\eta_t(i)-\eta_t(i+1)\big)^2$ by $2\rho(1-\rho)$ in ${\rm \uppercase\expandafter{\romannumeral1}}_N$ and ${\rm \uppercase\expandafter{\romannumeral2}}_N$.  To be precise, under $\mathbb{P}_\rho^N$,
\begin{equation}\label{equ 5.6}
\int_0^T {\rm \uppercase\expandafter{\romannumeral1}}_N \,dt
=2\rho(1-\rho)\int_0^T\,\int_0^1\partial_uh_t\,(u)\,\partial_uG_t(u)\,du\,dt+o(1)+o_{exp}(a_N)
\end{equation}
and
\begin{equation}\label{equ 5.7}
\int_0^T {\rm \uppercase\expandafter{\romannumeral2}}_N\,dt=2\rho(1-\rho)\int_0^T \big(h_t(0)-h_t(1)\big)\big(G_t(0)-G_t(1)\big)dt+o(1)+o_{exp}(a_N).
\end{equation}
By Taylor's expansion formula, it is not difficult to show that there exists a finite constant $C$ independent of $N$ such that
$\frac{d\,\nu_{_{N,\phi}}}{d\,\nu_\rho}\leq e^{C a_N}$ for sufficiently large $N$. Therefore, $\frac{d\widehat{\mathbb{P}}^N_{\phi, G}}{d\mathbb{P}^N_\rho}\leq e^{C a_N}$ for  large $N$. By Cauchy-Schwartz inequality, Equations \eqref{equ 5.6} and \eqref{equ 5.7} also hold under $\widehat{\mathbb{P}}^N_{\phi, G}$. As a result, under $\widehat{\mathbb{P}}^N_{\phi, G}$,
\begin{align*}
&\langle \mu_t^N, h_t\rangle
=o_p(1)+\langle \mu_0^N, h_0\rangle+\int_0^t (\partial_s+\mathcal{L}_N)\langle\mu_s^N, h_s\rangle ds\\
&+2\rho(1-\rho)\Bigg(\int_0^t \big(h_s(0)-h_s(1)\big)\big(G_s(0)-G_s(1)\big)ds
+\int_0^t\,\int_0^1\,\partial_uh_s\,(u)\,\partial_uG_s\,(u)\,du\,ds\Bigg).
\end{align*}

\,

Now we calculate $(\partial_s+\mathcal{L}_N)\langle\mu_s^N, h_s\rangle$. By direct calculations,
\begin{align*}
\mathcal{L}_N\langle\mu_s^N, h_s\rangle=&\frac{N^2}{a_N}\sum_{i=0}^{N-1}(\eta_s(i)-\rho)
\left(h_s\left(\frac{i+1}{N}\right)+h_s\left(\frac{i-1}{N}\right)-2h_s\left(\frac{i}{N}\right)\right)\\
&+\frac{N-N^2}{a_N}\big(\eta_s(0)-\eta_s(-1)\big) \left(h_s\left(\frac{-1}{N}\right)-h_s(0)\right)={\rm \uppercase\expandafter{\romannumeral3}}_N+{\rm \uppercase\expandafter{\romannumeral4}}_N,
\end{align*}
where
\[
{\rm \uppercase\expandafter{\romannumeral3}}_N=\frac{N^2}{a_N}\sum_{i\neq 0,-1}\big(\eta_s(i)-\rho\big)\left(h_s\left(\frac{i+1}{N}\right)+h_s\left(\frac{i-1}{N}\right)-2h_s\left(\frac{i}{N}\right)\right)
\]
and
\begin{align*}
{\rm \uppercase\expandafter{\romannumeral4}}_N=&\frac{N}{a_N}(\eta_s(0)-\eta_s(-1))\left(h_s\left(\frac{-1}{N}\right)-h_s(0)\right)
+\frac{N^2}{a_N}(\eta_s(0)-\rho)\left(h_s\left(\frac{1}{N}\right)-h_s(0)\right)\\
&+\frac{N^2}{a_N}(\eta_s(-1)-\rho)\left(h_s\left(\frac{-2}{N}\right)-h_s(-1)\right).
\end{align*}
By Taylor's expansion formula up to third order,
 $$
 {\rm \uppercase\expandafter{\romannumeral3}}_N=\langle\mu_s^N, \tilde{\Delta}h_s\rangle+o(1).
 $$
Since $h\in \mathscr{G}$, it is not difficult to check that ${\rm \uppercase\expandafter{\romannumeral4}}_N=o(1)$.

\,

In conclusion, we have shown that under $\widehat{\mathbb{P}}^N_{\phi, G}$,
\begin{align*}
&\langle \mu_t^N, h_t\rangle
=o_p(1)+\langle \mu_0^N, h_0\rangle+\int_0^t \langle \mu_s^N, (\partial_s+\tilde{\Delta})h_s\rangle ds\\
&+2\rho(1-\rho)\Bigg(\int_0^t \big(h_s(0)-h_s(1)\big)\big(G_s(0)-G_s(1)\big)dt
+\int_0^t\,\int_0^1\,\partial_uh_s(u)\,\partial_uG_s(u)\,du\,ds\Bigg)\\
&=o_p(1)+\langle \mu_0^N, h_0\rangle+\int_0^t \langle \mu_s^N, (\partial_s+\tilde{\Delta})h_s\rangle ds+\int_0^t\langle h_s|G_s\rangle ds.
\end{align*}
Specially, when $h\in \mathscr{G}_0$,
\[
\langle \mu_t^N, h\rangle
=o_p(1)+\langle \mu_0^N, h\rangle+\int_0^t \langle \mu_s^N, \tilde{\Delta}h\rangle \,ds+ \int_0^t\langle h|G_s\rangle \,ds
\]
for all $0\leq t\leq T$. Note that although the $o_p(1)$ term in the above equation is given for each $t$, it is easy to check that this $o_p(1)$ term can be chosen uniformly for $0\leq t\leq T$.
Since
\[
\mu_t^G(h)=\int_0^1 \phi(u)h(u)\,du+\int_0^t \mu_s^G(\tilde{\Delta}h)\,ds+\int_0^t\langle h|G_s\rangle \,ds,
\]
by Grownwall's inequality,
\[
\big|\langle \mu_t^N, \theta_m\rangle-\mu^G(\theta_m)\big|\leq \Big(o_p(1)+\big|\int_0^1 \phi(x)\theta_m(x)dx-\langle \mu_0^N, \theta_m\rangle\big|\Big)e^{|e_m|t}
\]
for all $0\leq t\leq T$ and $m\geq 1$.

Therefore, to show that $\mu^N$ converges in  $\widehat{\mathbb{P}}^N_{\phi, G}$-probability to $\mu^G$ in $D\left([0,T],\mathscr{M}\right)$, we only need to show that
\begin{equation}\label{equ 5.8}
\langle \mu_0^N, h\rangle=\int_0^1 \phi(u)h(u)\,du+o_p(1)
\end{equation}
under $\widehat{\mathbb{P}}^N_{\phi, G}$ for any $h\in \mathscr{G}_0$. According to the definition of $\nu_{_{N, \phi}}$ and Chebyshev's inequality, it is easy to check that Equation \eqref{equ 5.8} holds under $\mathbb{P}_\phi^N$. Since $M_0^N(G)=1$, $\mu_0^N$ has the same distribution under $\mathbb{P}_\phi^N$ and $\widehat{\mathbb{P}}^N_{\phi, G}$. This finishes the proof.
\qed

\,

\proof[Proof of the lower bound]

If $\inf_{\mu \in O}  I (\mu) =+\infty$, then Equation \eqref{equ lower bound} holds trivially. So we only need to deal with the case where
$\inf_{\mu \in O}  I (\mu)<+\infty$. For given $\epsilon>0$, there exists $\mu^\epsilon\in O$ such that
\[
I_{ini}(\mu^\epsilon_0)+I_{dyn} (\mu^\epsilon)\leq \inf_{\mu \in O}\, I (\mu)+\epsilon.
\]
By Lemma \ref{lemma 5.3}, there exists $\phi^\epsilon\in L^2[0,1]$ and $\psi^\epsilon\in \mathscr{H}$ such that
$$
\mu_0^\epsilon\,(G)=\langle\phi^\epsilon,G\rangle, \,\forall G \in \mathscr{G}_0, \quad  I_{ini}\,(\mu_0^\epsilon)=\frac{\int_0^1(\phi^\epsilon(u))^2\,du}{2\rho(1-\rho)},
$$
and
$$
\ell_{T}(\mu^\epsilon, G)=\langle\langle G, \psi^\epsilon \rangle\rangle,\,\forall G\in \mathscr{G}, \quad
I(\mu^\epsilon)=\frac{1}{2}\langle\langle\psi^\epsilon, \psi^\epsilon\rangle\rangle.
$$
Let $G\in \mathscr{G}$ such that $G_t=b_th$ for some $h\in \mathscr{G}_0$ and $b\in C^1[0,T]$. By the above formula and \eqref{l_T},
\[
b_T\mu_T^\epsilon(h)-b_0\mu_0^\epsilon(h)-\int_0^T b^\prime(s)\mu^\epsilon_s(h)ds=\int_0^Tb (s)\Big(\mu_s^\epsilon(\tilde{\Delta}h)+\langle h|\psi_s^\epsilon\rangle\Big) ds.
\]
Since $b$ is arbitrary, according to the formula of integration by parts, $\{\mu_t^\epsilon(h)\}_{0\leq t\leq T}$ is absolutely continuous and
\begin{equation}\label{equ 5.10}
\begin{cases}
&\frac{d}{dt}\mu_t^\epsilon(h)=\mu_t^{\epsilon}(\tilde{\Delta}h)+\langle h|\psi_t^\epsilon\rangle,\\
&\mu_0^\epsilon(h)=\int_0^1\phi^\epsilon(u)h(u) \,du
\end{cases}
\end{equation}
for any $h\in \mathscr{G}_0$.

\,

Since $\mathscr{G}_0$ is dense in $L^2[0,1]$ by Lemma \ref{lemma 1.1.1 topology} and $\mathscr{G}$ is dense in $\mathscr{H}$, there exist $\phi_n\in \mathscr{G}_0$ and $\psi_n\in \mathscr{G}$ such that
$\phi_n$ converges to $\phi^\epsilon$ in $L^2[0,1]$ and $\psi_n$ converges to $\psi^\epsilon$ in $\mathscr{H}$ as $n\rightarrow\infty$. Let $\mu_n\in D\big([0,T], \mathscr{M}\big)$ such that $\mu_{n,0}(G)=\langle \phi_n,G\rangle$ for any $G \in \mathscr{G}_0$, and $\ell_{T}(\mu_n, G)=\langle \langle G, \psi_n\rangle \rangle$ for any $G\in \mathscr{G}$.  According to an analysis similar with that leading to Equation \eqref{equ 5.10}, $\mu_n$ is the solution to the Equation
\begin{equation}\label{equ 5.11}
\begin{cases}
&\frac{d}{dt}\mu_{n,t}(h)=\mu_{n,t}(\tilde{\Delta}h)+\langle h|\psi_{n,t}\rangle, \\
&\mu_{n,0}(h)=\int_0^1 \phi_n(u)h(u) \,du
\end{cases}
\end{equation}
for any $h\in \mathscr{G}_0$.
By Lemma \ref{lemma 5.3}, $$
I_{ini}(\mu_{n,0})=\frac{\int_0^1(\phi_n(u))^2\,du}{2\rho(1-\rho)}
$$
and
$$
I_{dyn}\,(\mu_n)=\frac{1}{2} \langle\langle \psi_n, \psi_n \rangle\rangle =\ell_T(\mu_n, \psi_n)-\frac{1}{2} \langle\langle \psi_n, \psi_n \rangle\rangle.
$$

By \eqref{equ 5.10}, \eqref{equ 5.11} and Grownwall's inequality, for any $0\leq t\leq T$ and any integer $k$,
\[
\big|\mu_{n,t}(\theta_k)-\mu_t^\epsilon(\theta_k)\big|\leq \Big|\int_0^1 (\phi^\epsilon(u)-\phi_n(u))\theta_k(u)\,du+\langle \langle \theta_k, \psi^\epsilon-\psi^n\rangle\rangle\Big|e^{|e_k|t}.
\]
Consequently, $\mu_n$ converges to $\mu^\epsilon$ in $D \left([0,T],\mathscr{M}\right)$ and
\[
\lim_{n\rightarrow+\infty}\big(I_{dyn}\,(\mu_n)+I_{ini}(\mu_{n,0})\big)=I_{dyn}\,(\mu^\epsilon)+I_{ini}(\mu_0^\epsilon).
\]
Hence, there exists $m\geq 1$ such that $\mu_m\in O$ and
\[
I_{dyn}\,(\mu_m)+I_{ini}(\mu_{m,0})\leq I_{dyn}\,(\mu^\epsilon)+I_{ini}(\mu_0^\epsilon)+\epsilon.
\]
Let $D_{\epsilon}=\big\{\mu:~|\ell_T(\mu, \psi_m)-\ell_T(\mu_m, \psi_m)|<\epsilon\big\}\bigcap O$, then by Lemma \ref{lemma 5.4} and Equation \eqref{equ 5.11}, $\mu^N$ converges in $\widehat{\mathbb{P}}^N_{\phi_m, \psi_m}$-probability to $\mu_m$ as $N\rightarrow+\infty$ and hence
\begin{equation*}
\lim_{N\rightarrow+\infty}\widehat{\mathbb{P}}^N_{\phi_m, \psi_m}\big(\mu^N\in D_{\epsilon}\big)=1.
\end{equation*}
According to the expression of $M_T^N(G)$ given in Equation \eqref{r5} and Lemmas \ref{lem1} and \ref{lem2},
\[
M_T^N(\psi_m)=\exp \left\{\frac{a_N^2}{N}\Big(\ell_T(\mu^N, \psi_m)-\frac{1}{2} \langle\langle \psi_m, \psi_m \rangle\rangle +o(1)+\widehat{\varepsilon}_N\Big)\right\},
\]
where $\widehat{\varepsilon}_N=o_{exp}(a_N)$ under $\mathbb{P}_\rho^N$.
As we have shown above, $\frac{d\widehat{\mathbb{P}}^N_{\phi_m, \psi_m}}{d\mathbb{P}_\rho^N}\leq e^{C a_N}$ for sufficiently large $N$, hence $\widehat{\varepsilon}_N=o_{exp}(a_N)$ under $\widehat{\mathbb{P}}^N_{\phi_m, \psi_m}$.

According to the definition of $\nu_{_{N,\phi_m}}$, Chebyshev's inequality and Taylor's expansion formula up to second order, it is not difficult to show that
\[
\frac{d\mathbb{P}_\rho^N}{d\mathbb{P}^N_{\phi_m}}=\exp\left\{-\frac{a_N^2}{N}\left(\frac{\int_0^1 \phi_m^2(u)\,du}{2\rho(1-\rho)}+\widetilde{\varepsilon}_N\right)\right\}
=\exp\left\{-\frac{a_N^2}{N}\left(I_{ini}(\mu_{m,0})+\widetilde{\varepsilon}_N\right)\right\},
\]
where $\widetilde{\varepsilon}_N=o_p(1)$ under $\widehat{\mathbb{P}}^N_{\phi_m, \psi_m}$. Consequently, let
\[
\widehat{D}_{N, \epsilon}=\{\mu^N\in D_{\epsilon}\}\bigcap \{|\widehat{\varepsilon}_N|<\epsilon, |\widetilde{\varepsilon}_N|<\epsilon\},
\]
then
\begin{equation}\label{equ 5.9}
\lim_{N\rightarrow+\infty}\widehat{\mathbb{P}}^N_{\phi_m, \psi_m}\big(\widehat{D}_{N, \epsilon}\big)=1.
\end{equation}
For sufficiently large $N$, on $\widehat{D}_{N, \epsilon}$,
\begin{align*}
&\frac{d\mathbb{P}_\rho^N}{d\widehat{\mathbb{P}}^N_{\phi_m, \psi_m}}=\frac{d\mathbb{P}_\rho^N}{d\mathbb{P}^N_{\phi_m}}\frac{d\mathbb{P}^N_{\phi_m}}{d\widehat{\mathbb{P}}^N_{\phi_m, \psi_m}}=\frac{d\mathbb{P}_\rho^N}{d\mathbb{P}^N_{\phi_m}}\frac{1}{M_T^N(\psi_m)} \\
&\geq \exp\Big\{-\frac{a_N^2}{N}\Big(I_{ini}(\mu_{m,0})+\ell_T(\mu_m, \psi_m)-\frac{1}{2}\langle\langle \psi_m, \psi_m \rangle\rangle+3\epsilon\Big)\Big\} \\
&=\exp\Big\{-\frac{a_N^2}{N}\Big(I_{ini}(\mu_{m,0})+I_{dyn}(\mu_m)+3\epsilon\Big)\Big\}\geq \exp\Big\{-\frac{a_N^2}{N}\Big(I_{ini}(\mu^\epsilon_0)+I_{dyn}(\mu^\epsilon)+4\epsilon\Big)\Big\}\\
&\geq \exp\Big\{-\frac{a_N^2}{N}\Big(\inf_{\mu\in O}\big(I_{ini}(\mu_0)+I_{dyn}(\mu)\big)+5\epsilon\Big)\Big\}.
\end{align*}
Therefore, by Equation \eqref{equ 5.9},
\begin{align*}
&\liminf_{N \rightarrow \infty} \frac{N}{a_N^2} \log Q^N_\rho\, [O]=\liminf_{n\rightarrow+\infty}\,\frac{N}{a_N^2}\log \mathbb{P}_\rho^N\big(\mu_N\in O\big)\geq \liminf_{n\rightarrow+\infty}\,\frac{N}{a_N^2}\log \mathbb{P}_\rho^N\big(\widehat{D}_{N, \epsilon}\big)\\
&=\liminf_{n\rightarrow+\infty}\,\frac{N}{a_N^2}\log \widehat{\mathbb{E}}^N_{\phi_m, \psi_m}\left[\frac{d\mathbb{P}_\rho^N}{d\widehat{\mathbb{P}}^N_{\phi_m, \psi_m}}\mathbf{1}_{\widehat{D}_{N, \epsilon}}\right]\geq -\inf_{\mu\in O}\big(I_{ini}(\mu_0)+I_{dyn}(\mu)\big)-5\epsilon.
\end{align*}
Since $\epsilon$ is arbitrary, the proof is complete.
\qed

\,

\appendix
\section{Appendix}

\subsection{Lemma \ref{lemma 1.1.1 topology}}

\proof[Proof of Lemma \ref{lemma 1.1.1 topology}]
We use $e_{-n}$ to denote $-(2\pi n)^2$ for $n\geq 0$ and use $e_n$ to denote $-k_n^2$ for $n\geq 1$. Let
\[
\widehat{\mathscr{G}}_0=\left\{G\in C^2[0,1]:~G^\prime(0)=G^\prime(1)=G(0)-G(1)\right\},
\]
then, by direct calculations, $\{e_n\}_{-\infty<n<+\infty}$ are all the eigenvalues of $\tilde{\Delta}$ limited on $\widehat{\mathscr{G}}_0$. Moreover, $\theta_{-n}(x)=\cos\big(2n\pi x\big)$ is the eigenvector with respect to $e_{-n}$ and $\theta_n(x)=\sin\big(k_n(x-\frac{1}{2})\big)$ is the eigenvector with respect to $e_n$.

According to the definition of the operator $\frac{d}{dx}\frac{d}{dW}$ introduced in \cite{Franco2009moderate}, $\tilde{\Delta}\Big|_{\hat{\mathscr{G}}_0}=\frac{d}{dx}\frac{d}{dW}$ for $W$ equal to Lebesgue measure plus the Dirac measure at $0$. Consequently, by \cite[Theorem 1]{Franco2009moderate}, $\{\theta_n\}_{-\infty<n<+\infty}$ is an orthogonal basis of $L^2[0,1]$.
\qed

\subsection{Equation \eqref{equ control of 2k moments}}

\proof[Proof of Equation \eqref{equ control of 2k moments}]

By Fubini's theorem,
\[
 {\rm E}_{\nu_\rho} \left[ \left(\eta^{M,{\rm R}} (0) - \rho \right)^{2k} \right]=
 \int_0^{+\infty} 2kt^{2k-1}{\rm P}_{\nu_\rho}\left(\left|\eta^{M,{\rm R}} (0) - \rho \right|\geq t\right) dt.
\]
For $0\leq t<1-\rho$ and $\theta\geq 0$, by Chebyshev's inequality,
\begin{align*}
{\rm P}_{\nu_\rho}\left(\eta^{M,{\rm R}} (0) - \rho\geq t\right)&\leq e^{-tM\theta}{\rm E}_{\nu_\rho}\left[e^{\theta M\left(\eta^{M,\rm{R}} (0) - \rho \right)}\right]=\left(e^{-\theta t}{\rm E}_{\nu_\rho}\left[e^{\theta\left(\eta (0) - \rho \right)}\right]\right)^M\\
&=\left(e^{-\theta(t+\rho)}\left[e^\theta\rho+1-\rho\right]\right)^M.
\end{align*}
Let $\theta=\log \frac{(t+\rho)(1-\rho)}{\rho(1-t-\rho)}$, then
\begin{equation}\label{equ A control moments}
{\rm P}_{\nu_\rho}\left(\eta^{M,{\rm R}} (0) - \rho\geq t\right)\leq e^{-M\mathscr{I}(t)}
\end{equation}
for $0\leq t< 1-\rho$, where
\begin{equation*}
\mathscr{I}(t)=-(1-t-\rho)\log(1-\rho)+(t+\rho)\log(t+\rho)+(1-t-\rho)\log(1-\rho-t)-(t+\rho)\log \rho.
\end{equation*}
We define $\mathscr{I}(1-\rho)=-\log \rho$. Note that $\lim_{t\uparrow1-\rho}\mathscr{I}(t)=-\log\rho$ and
\[
\rm{P}_{\nu_\rho}\left( \eta^{M,{\rm R}} (0) - \rho\geq 1-\rho\right)=\left(P_{\nu_\rho}\left( \eta(0) =1\right)\right)^M=\rho^M,
\]
hence Equation \eqref{equ A control moments} holds for $0\leq t\leq 1-\rho$ and $\mathscr{I}$ is continuous in $[0, 1-\rho]$. It is easy to check that
$\mathscr{I}(0)=0$ and $\mathscr{I}(t)>0$ for $t\in (0, 1-\rho]$. By L'Hospital's rule,
\[
\lim_{t\downarrow 0}\frac{\mathscr{I}(t)}{t^2}=\frac{1}{2\rho(1-\rho)}.
\]
Hence $\frac{\mathscr{I}(t)}{t^2}$ is continuous and strictly positive on $[0, 1-\rho]$. Let $J_1(\rho)=\inf_{0\leq t\leq 1-\rho}\frac{\mathscr{I}(t)}{t^2}$, which is strictly positive, then
\begin{equation}\label{equ A2}
{\rm P}_{\nu_\rho}\left(\eta^{M,{\rm R}} (0) - \rho\geq t\right)\leq e^{-MJ_1(\rho)t^2}
\end{equation}
for all $M\geq 1$ and any $t\in [0, 1-\rho]$ by Equation \eqref{equ A control moments}. Note that ${\rm P}_{\nu_\rho}\left( \eta^{M,{\rm R}} (0) - \rho\geq t\right)=0$ for $t>1-\rho$, hence Equation \eqref{equ A2} holds for all $M\geq 1$ and $t\geq 0$. A similar argument proves that there exists $J_2(\rho)>0$ such that
\[
{\rm P}_{\nu_\rho}\left(\eta^{M,{\rm R}} (0) - \rho\leq -t\right)\leq e^{-MJ_2(\rho)t^2}
\]
for all $M\geq 1$ and $t\geq 0$. Let $J(\rho)=\inf\{J_1(\rho),~J_2(\rho)\}$, then
\begin{align*}
{\rm E}_{\nu_\rho} \left[ \left(\eta^{M,{\rm R}} (0) - \rho \right)^{2k} \right]&=
 \int_0^{+\infty} 2kt^{2k-1}P_{\nu_\rho}\left(\left|\eta^{M,{\rm R}} (0) - \rho \right|\geq t\right) dt\\
 &\leq \int_0^{+\infty} 4kt^{2k-1}e^{-MJ(\rho)t^2}dt= \frac{2k!}{M^k\left(J(\rho)\right)^k}.
\end{align*}
Equation \eqref{equ control of 2k moments} follows  by taking $C(\rho)=\frac{2}{J(\rho)}$.
\qed

\subsection{Existence and Uniqueness of solution to Equation \eqref{equ ODE for linear operator}}

\proof[Proof of the existence]

We directly construct a solution to Equation \eqref{equ ODE for linear operator}. For $-\infty<n<+\infty$, let $\{x_t^n\}_{0\leq t\leq T}$ be the unique solution to the ODE
\[
\begin{cases}
&\frac{d}{dt}x_t^n=e_nx_t^n+\langle\theta_n|G_t\rangle,\\
&x_0^n=\int_0^1\phi(x)\theta_n(x)dx,
\end{cases}
\]
where $e_n$ is defined as in the proof of Lemma \ref{lemma 1.1.1 topology}. That is to say,
\[
x_t^n=e^{e_nt}\int_0^1 \phi(x)\theta_n(x)dx+\int_0^te^{e_n(t-s)}\langle\theta_n|G_s\rangle ds.
\]
For any $f=\sum_{-\infty<n<+\infty}C_n(f)\theta_n\in \mathscr{G}_0$ and $t\geq 0$, we define
\[
\mu_t^G(f)=\sum_{-\infty<n<+\infty}C_n(f)x_t^n.
\]
Note that the coefficients $\{C_n(f)\}_{-\infty<n<+\infty}$ are unique according to Lemma \ref{lemma 1.1.1 topology} and hence the definition of $\mu^G$ is reasonable. Since
\[
\mu_t^G(\theta_n)=x_t^n
\quad \text{and} \quad \mu_t^G(\tilde{\Delta}\theta_n)=\mu_t^G(e_n\theta_n)=e_nx_t^n,
\]
it is easy to check that $\mu^G$ is the solution to Equation \eqref{equ ODE for linear operator}.
\qed

\,

\proof[Proof of the uniqueness]

Assuming that $\mu$ and $\nu$ are both solutions to Equation \eqref{equ ODE for linear operator}, then
\[
|\mu_t(\theta_n)-\nu_t(\theta_n)|\leq |e_n|\int_0^t |\mu_s(\theta_n)-\nu_s(\theta_n)|ds.
\]
By Grownwall's inequality,
\[
|\mu_t(\theta_n)-\nu_t(\theta_n)|\leq 0e^{|e_n|t}=0
\]
for any $0\leq t\leq T$ and $n\geq 1$. Hence, $\mu=\nu$ and the proof is complete.
\qed

\,

\textbf{Acknowledgments.} The authors are grateful to the financial support from the National Natural Science Foundation of China with
grant numbers 11501542 and 11971038.

\bibliographystyle{plain}
\bibliography{zhaoreference}
	
\end{document}